\RequirePackage{fix-cm}

%%%%%%% To Springer
%\documentclass{svjour3}                     % onecolumn (standard format)
%\documentclass[smallextended]{svjour3}     % onecolumn (ditto)
\documentclass[smallextended,envcountsect,envcountsame,numbook]{svjour3} 
\smartqed 

\makeatletter
\def\cl@chapter{}
\makeatother
\usepackage[utf8]{inputenc}
\usepackage{lmodern}
\usepackage[pdfpagelabels,colorlinks]{hyperref}
\usepackage{mathtools,amsfonts,amssymb,amscd}
\usepackage[misc]{ifsym}
\usepackage[capitalize,nameinlink]{cleveref}
\usepackage{autonum}
\usepackage[shortlabels]{enumitem}
\usepackage{lmodern}
\usepackage{amssymb}
\usepackage{subcaption}
\usepackage{cite}
\usepackage[pdftex,dvipsnames]{xcolor}
\setlength{\marginparwidth}{2cm}
\usepackage[colorinlistoftodos,prependcaption,textsize=tiny]{todonotes}
\reversemarginpar

\oddsidemargin=0pt
\evensidemargin=0pt
\textwidth=6.2in

\headsep=1cm

\journalname{Computational Optimization and Applications}

% \spnewtheorem{fact}{Fact}{\bfseries}{\it}

% \usepackage{amsthm}
%%%% Theorems 
% \newtheorem{theorem}{Theorem}[section]
% \newtheorem{lemma}[theorem]{Lemma}
% \newtheorem{corollary}[theorem]{Corollary}
% \newtheorem{proposition}[theorem]{Proposition}
% \theoremstyle{definition}
% \newtheorem{definition}[theorem]{Definition}
% \newtheorem{assumption}[theorem]{Assumption}
% \newtheorem{condition}[theorem]{Condition}
% \newtheorem{algorithm}[theorem]{Algorithm}
% \newtheorem{example}[theorem]{Example}
% \newtheorem{fact}[theorem]{Fact}
% \theoremstyle{remark}
% \newtheorem{remark}[theorem]{Remark}
% \newtheorem{example}[theorem]{Example}
% \Crefname{lemma}{Lemma}{Lemmas}
% \crefname{proposition}{Proposition}{propositions}
% \Crefname{proposition}{Proposition}{Propositions}
% \Crefname{corollary}{Corollary}{Corollaries}

% \theoremstyle{plain}
% \spnewtheorem{Thm}[proposition]{Theorem}{\bfseries}{\itshape}
% \spnewtheorem{Lem}[proposition]{Lemma}{\bfseries}{\itshape}
% \spnewtheorem{Def}{Definition}{\bfseries}{}
% \spnewtheorem{Cor}[proposition]{Corollary}{\bfseries}{\itshape}

% \theoremstyle{definition}
% \newtheorem*{pf}{Proof}

%\newcommand{\NN}{\ensuremath{\mathds N}}
\def\re{\mathbb R}
\def\na{\mathbb N}

\def\lV{\left\lVert }
\def\rV{\right\rVert }
\def\lv{\left\lvert }
\def\rv{\right\rvert}

\DeclareMathOperator{\Id}{Id}
\DeclareMathOperator{\aff}{aff}
\DeclareMathOperator{\dist}{dist}
\DeclareMathOperator{\dom}{dom}
\DeclareMathOperator{\inte}{int}
\DeclareMathOperator{\bound}{bd}
\DeclareMathOperator{\epi}{epi}
\newcommand{\norm}[1]{\left\lVert#1\right\rVert}
\newcommand{\scal}[2]{\left\langle{#1},{#2}  \right\rangle}
\DeclareMathOperator{\circum}{circ}

\usepackage[left,mathlines]{lineno}
\usepackage{etoolbox} %% <- for \pretocmd, \apptocmd and \patchcmd
%% Patch AMS math environment:
\newcommand*\linenomathpatchAMS[1]{%
  \expandafter\pretocmd\csname #1\endcsname {\linenomathAMS}{}{}%
  \expandafter\pretocmd\csname #1*\endcsname{\linenomathAMS}{}{}%
  \expandafter\apptocmd\csname end#1\endcsname {\endlinenomath}{}{}%
  \expandafter\apptocmd\csname end#1*\endcsname{\endlinenomath}{}{}%
}

%% Definition of \linenomathAMS depends on whether the mathlines option is provided
\expandafter\ifx\linenomath\linenomathWithnumbers
  \let\linenomathAMS\linenomathWithnumbers
  %% The following line gets rid of an extra line numbers at the bottom:
  \patchcmd\linenomathAMS{\advance\postdisplaypenalty\linenopenalty}{}{}{}
\else
  \let\linenomathAMS\linenomathNonumbers
\fi

% \linenomathpatch{equation} %% <- unnecessary, equation is already patched
\linenomathpatchAMS{gather}
\linenomathpatchAMS{multline}
\linenomathpatchAMS{align}
\linenomathpatchAMS{alignat}
\linenomathpatchAMS{flalign}

% \linenumbers

\begin{document}

\title{The circumcentered-reflection method achieves better rates than alternating projections~\thanks{RB was partially supported by the \emph{Brazilian Agency Conselho Nacional de Desenvolvimento Cient\'ifico e Tecnol\'ogico} (CNPq), Grants 304392/2018-9 and 429915/2018-7; \\ YBC was partially supported by the \emph{National Science Foundation} (NSF), Grant DMS -- 1816449.}}

\author{Reza Arefidamghani
\and Roger Behling  \and Yunier Bello-Cruz  \and 
Alfredo N. Iusem\and Luiz-Rafael Santos 
}

\institute{
  Reza Arefidamghani \and Alfredo N. Iusem  \at Instituto de Matem\'atica Pura e Aplicada\\
% Estrada Dona Castorina 110, Jardim Bot\^anico, \\
 Rio de Janeiro, RJ -- 22460-320,  Brazil \email{\{reza.arefidamghani, iusp\}@impa.br} 
\and 
  Roger Behling    \at School of Applied Mathematics, Funda\c{c}\~ao Get\'ulio Vargas \\ 
Rio de Janeiro, RJ -- 22250-900, Brazil. \email{rogerbehling@gmail.com}
    \and 
Yunier Bello-Cruz \at Department of Mathematical Sciences, Northern Illinois University. \\ 
 DeKalb, IL -- 60115-2828, USA. \email{yunierbello@niu.edu}
\and 
 Luiz-Rafael Santos \Letter \at Department of Mathematics, Federal University of Santa Catarina. \\ 
Blumenau, SC -- 88040-900, Brazil. \email{l.r.santos@ufsc.br}
}
\date{\today}

\titlerunning{CRM achieves better rates than MAP}
\authorrunning{Arefidamghani, Behling, Bello-Cruz, Iusem and Santos}

\maketitle

\begin{abstract}

We study the convergence rate  of the Circumcentered-Reflection Method (CRM) for solving
the convex feasibility problem and compare it with the Method of Alternating Projections (MAP). Under
an error bound assumption, we prove that both methods converge linearly, with asymptotic constants
depending on a parameter of the error bound, and that the one derived for CRM is strictly better than the 
one for MAP. Next, we analyze two classes of fairly generic examples. 
In the first one, the angle between the convex sets approaches zero near the intersection, so that
the MAP sequence converges sublinearly, but CRM still enjoys linear convergence. In the second class
of examples, the angle between the sets does not vanish and MAP exhibits its standard behavior, \emph{i.e.}, it 
converges linearly, yet, perhaps surprisingly, CRM attains superlinear convergence.

\keywords{Convex feasibility problem \and alternating projections \and 
circumcentered-reflection method \and convergence rate.}
\subclass{49M27 \and 65K05 \and 65B99 \and 90C25}

\end{abstract}

\section{Introduction}\label{s0}

We deal in this paper with the convex feasibility problem (CFP), 
defined as follows:
given closed and convex sets $K_1, \dots , K_m\subset\re^n$, find a point in
$\cap_{i=1}^mK_i$.

We study the \emph{Circumcentered-Reflection method} (CRM) for solving CFP under an error bound regularity condition. CRM and generalized circumcenters were introduced in \cite{Behling:2018,Behling:2018a}, and subsequently studied in \cite{Behling:2019,Behling:2020,Bauschke:2018a,Bauschke:2021,Bauschke:2020,Bauschke:2020c} with the geometrical appeal of accelerating classical projection/reflection based methods. We will see that a nonaffine setting embedded with an error bound condition provides, at least, linear convergence of CRM. Moreover, we present a quite general instance for which CRM converges superlinearly, opening a path for future research lines on circumcenters-type schemes. In addition to these contributions, we show that CRM  is faster than the famous \emph{Method of Alternating Projections }(MAP), even in the lack of an error bound. MAP has a vast literature  (see, for instance, \cite{Bauschke:1993,Kayalar:1988,Neumann:1950,Bauschke:2016a}) and concerns CPF for two sets, in principle. However, the discussion below allows us to apply both CRM and MAP to the multi-set CFP.

Two very well-known methods for CFP 
related to MAP are the \emph{Sequential Projection
Method} (SePM) and the \emph{Simultaneous Projection Method} (SiPM), which can be traced
back to \cite{Kaczmarz:1937} and \cite{Cimmino:1938} respectively, and are defined as follows. 
Consider the operators $\widehat P,
\overline P:\re^n\to\re^n$ given by $\widehat P\coloneqq P_{K_m}\circ\dots\circ P_{K_1}$,
$\overline P\coloneqq\frac{1}{m}\sum_{i=1}^mP_{K_i}$, where each $P_{K_i}:\re^n\to K_i$ is the orthogonal projection  
onto $K_i$.  Starting from an arbitrary $z \in\re^n$,
SePM and SiPM generate  sequences $(\hat{x}^k)_{k\in\na}$ and $(\bar{x}^k)_{k\in\na}$ given by $\hat{x}^{k+1}=\widehat P(\hat{x}^k)$,
$\bar{x}^{k+1}=\overline P(\bar{x}^k)$, respectively, where $\bar{x}^{0} = \hat{x}^{0} = z$. When  $\cap_{i=1}^mK_i\ne\emptyset$, 
the sequences generated by both
methods are known to be globally convergent to points belonging to $\cap_{i=1}^mK_i$,
\emph{i.e.}, to solve CFP. Under suitable assumptions, both methods have 
interesting convergence properties also in the infeasible case, \emph{i.e.},  when  
$\cap_{i=1}^mK_i =\emptyset$, but we will not deal with 
this case. See \cite{Bauschke:1996} for an in-depth study of these and other 
projections methods for CFP. 

An interesting relation between SePM and SiPM was
found by Pierra in \cite{Pierra:1984}. Consider the sets $\mathrm{K}\coloneqq K_1 \times\dots\times K_m\subset
\re^{nm}, \mathrm{U}\coloneqq\{(x,\dots,x)\mid x\in\re^m\}\subset\re^{nm}$. 
Apply SePM to the sets $\mathrm{K},\mathrm{U}$ in the product space $\re^{n m}$,
\emph{i.e.}, take $\mathrm{x}^{k+1}=P_{\mathrm{U}}\left(P_{\mathrm{K}}(\mathrm{x}^k)\right)$
starting from $\mathrm{x}^0\in \mathrm{U}$. Clearly, $\mathrm{x}^k$ belongs to $\mathrm{U}$
for all $k\in\na$, so that we may write $\mathrm{x}^k=(x^k,\ldots,x^k)$ with $x^k\in\re^n$. It was proved in 
\cite{Pierra:1984} that $x^{k+1}=\overline P(x^k)$, \emph{i.e.}, a step of SePM applied to two convex sets in
the product space $\re^{n\times m}$ is equivalent to a step of SiPM in the original space $\re^n$.
Thus, SePM with just two sets plays a sort of special role and, therefore, carries a name of its own, 
namely  MAP. 
Observe that in the equivalence above one of the two sets in the product space, 
namely $\mathrm{U}$, is a linear subspace.
This fact is essential for the convergence of CRM 
applied for solving CFP; see \cite{AragonArtacho:2019,Behling:2020}. 

Let us start to focus on the alleged acceleration effect of CRM with respect to MAP. There is abundant numerical evidence
of this effect (see \cite{Behling:2018a,Behling:2020,Dizon:2019,Behling:2020b}); in this paper, we will present some  
analytical evidence, which strengthens the results from~\cite{Behling:2020}. In view of Pierra’s reformulation~\cite{Pierra:1984}, the general CFP can be seen as a specific convex-affine intersection problem and since both CRM and MAP converge for the general convex-affine intersection problem, from now on, we seek a point common to a given closed convex set $K\subset\re^n$  and an affine manifold $U\subset\re^n$.

For finding a point in $K\cap U\neq \emptyset$,  MAP and CRM iterate by means of the operators  $T=P_U\circ P_K$   and  $C(\cdot)=\circum(\cdot, R_K(\cdot), R_U(R_K(\cdot)))$, respectively, where   $R_K=2P_K-\Id$ and $R_U=2P_U-\Id$ are the reflection operators over $K$ and $U$, respectively. 
For a point $x\in \re^n$, $C(x)$, when exists, is the point equidistant to $x,R_K(x)$  and $R_U(R_K(x))$  that lies in the affine manifold determined by the latter three points.

A first
result in the analytical study of the acceleration effect of CRM over MAP was derived in \cite{Behling:2020}, where it was proved
that, for all $x\in U$, $C(x)$ is well-defined and  $\dist(C(x),K\cap U)\le \dist(T(x),K\cap U)$, where $\dist$ stands for the Euclidean distance. The previous inequality means that the point obtained after a CRM step is closer to 
(or at least
no farther from) $K\cap U$ than the one obtained after a MAP step from the same point. 
This local (or myopic) acceleration does not imply immediately
that the CRM sequence converges faster than the MAP one. In order to show global acceleration, we will focus on special
situations where the convergence rate of the MAP can be precisely established. 

MAP is known to be linearly convergent
in several special situations, \emph{e.g.}, when both $K$ and $U$ are affine manifolds (see \cite{Kayalar:1988}) or when $K\cap U$ has
nonempty interior (see \cite{Bauschke:1993}). In \cref{s2} we will consider another such case, 
namely when a certain so-called {\it error bound} (EB from now on)
holds, meaning that there exists $\omega\in (0,1)$ such that $\dist(x, K)\ge\omega \dist(x,K\cap U)$ for all $x\in U$. 
This error bound resembles the regularity conditions presented in \cite{Bauschke:1993,Bauschke:1996,Behling:2017}. 
We will prove that in this case both the MAP and the CRM sequences converge linearly, with asymptotic constants bounded by
$\sqrt{1-\omega^2}$ for MAP, and by the strictly better bound $\sqrt{{1-\omega^2}}/\sqrt{{1+\omega^{2}}}$ for CRM, thus showing 
that under EB, CRM is faster than MAP. For the case of MAP, linear convergence under the error bound condition with this asymptotic constant is already known (see, for instance, \cite[Corollary 3.14]{Bauschke:1993}) even if $U$ is not affine;  
we present it for the sake of completeness.
Then, in \cref{s3} we will exhibit two rather generic families of
examples where CRM converges indeed faster than MAP. In the first one, $K\subset\re^{n+1}$ will be the epigraph of a
convex function $f:\re^n\to\re\cup\{+\infty\}$ and $U\subset\re^{n+1}$ a support hyperplane of $K$. We will show that
in this situation, under adequate assumptions on $f$, the MAP sequence converges sublinearly, while the CRM 
sequence converges linearly, and we will give as well an explicit bound for the asymptotic constant of the CRM sequence. Also, we will present
a somewhat pathological example for which both the MAP sequence and the CRM one converge sublinearly. 
In the second family, $K$ will still be the epigraph of a convex $f$, but $U$ will not be a supporting hyperplane of $K$; 
rather it will intersect the interior of $K$. In this case, under not too demanding assumptions on $f$, the MAP sequence converges 
linearly (we will give an explicit
bound of its asymptotic constant), while CRM converges superlinearly. These results firmly corroborate the already established numerical evidence in \cite{Behling:2020} of the superiority of CRM over MAP.

\section{Preliminaries}\label{s1}
 
We recall first the definition of Q-linear and R-linear convergence.

\begin{definition}\label[definition]{d1}
Let $(y^k)_{k\in\na}\subset\re^n$ be a convergent sequence to $y^*$. Assume that $y^k\ne y^*$ for all $k\in\na$.
Define \begin{equation}q\coloneqq \limsup_{k\to\infty}\frac{\lV y^{k+1}-y^*\rV}{\lV y^k-y^*\rV}, \quad   \mbox{and}\quad 
r\coloneqq \limsup_{k\to\infty}\lV y^k-y^*\rV^{1/k}.\end{equation} Then, the convergence of $(y^k)_{k\in\na}$ is
\begin{enumerate}[(i), format=\bf,leftmargin=*,align=right, widest=iii]
	\item \emph{Q-superlinearly} if $q=0$;
	\item \emph{Q-lineary} if $q\in(0,1)$;
	\item \emph{Q-sublinearly} if $q\ge 1$;
	\item \emph{R-superlinearly} if $r=0$;
	\item \emph{R-linearly} if $r\in(0,1)$;
\item \emph{R-sublinearly} if $r\ge 1$. 
\end{enumerate}

The values
$q,r$ are called \emph{asymptotic constants} of $(y^k)_{k\in\na}$.
\end{definition}

It is well known that Q-linear convergence implies R-linear convergence (with the same asymptotic constant), but the
converse statement does not hold true.

We remind now the notion of Fej\'er monotonicity in $\re^{n}$.

\begin{definition}
A sequence $(y^k)_{k\in\na}$ is\emph{ Fej\'er monotone} with respect to a set $M$ when 
$\lV y^{k+1}-y\rV\le\lV y^k-y\rV$, for all $y\in M$.
\end{definition}

\begin{proposition}\label[proposition]{p1}
If $(y^k)_{k\in\na}$ is Fej\'er monotone with respect to $M$ then 
\begin{enumerate}[(i), format=\bf,leftmargin=*,align=right, widest=ii]
\item $(y^k)_{k\in\na}$ is bounded;
\item  if a cluster point $\bar y$ of $(y^k)_{k\in\na}$ belongs to $M$, then $\displaystyle\lim_{k\to\infty}y^k=\bar y$.
\end{enumerate}
\end{proposition}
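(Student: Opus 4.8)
The plan is to prove the two standard consequences of Fej\'er monotonicity, namely boundedness and convergence to a cluster point lying in $M$.

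\medskip

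\textbf{Part (i).} First I would fix any point $y \in M$, which is legitimate because a Fej\'er monotone sequence is defined relative to a nonempty set (and if $M = \emptyset$ there is nothing to assert). By the defining inequality $\lV y^{k+1} - y \rV \le \lV y^k - y \rV$, the sequence of nonnegative reals $\bigl(\lV y^k - y \rV\bigr)_{k\in\na}$ is nonincreasing, hence bounded above by $\lV y^0 - y \rV$. Consequently $\lV y^k \rV \le \lV y^k - y \rV + \lV y \rV \le \lV y^0 - y \rV + \lV y \rV$ for all $k$, so $(y^k)_{k\in\na}$ lies in a ball of finite radius and is therefore bounded.

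\medskip

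\textbf{Part (ii).} Suppose $\bar y$ is a cluster point of $(y^k)_{k\in\na}$ with $\bar y \in M$; pick a subsequence $(y^{k_j})_{j\in\na}$ with $y^{k_j} \to \bar y$. Applying Part (i)'s observation with the specific choice $y = \bar y \in M$, the full sequence $\bigl(\lV y^k - \bar y \rV\bigr)_{k\in\na}$ is nonincreasing and bounded below by $0$, hence convergent to some limit $\ell \ge 0$. Along the subsequence we have $\lV y^{k_j} - \bar y \rV \to 0$, so $\ell = 0$. Since the whole monotone sequence $\lV y^k - \bar y \rV$ converges to the same limit as any of its subsequences, we conclude $\lV y^k - \bar y \rV \to 0$, i.e., $\lim_{k\to\infty} y^k = \bar y$.

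\medskip

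I do not anticipate a genuine obstacle here: the result is a textbook fact, and the only thing requiring the slightest care is the logical order in Part (ii) --- one must first extract the limit $\ell$ of the monotone nonnegative sequence $\lV y^k - \bar y \rV$ (which exists precisely because of monotonicity, the very reason Fej\'er monotonicity is useful), and only then identify $\ell = 0$ using the convergent subsequence. Writing it in the reverse order would not work, since without monotonicity a single convergent subsequence says nothing about the whole sequence.
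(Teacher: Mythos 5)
Your proof is correct. The paper itself does not argue this proposition at all --- it simply cites Theorem 2.16 of Bauschke and Borwein --- and your self-contained argument is exactly the standard one behind that reference: fix $y\in M$ to get a nonincreasing, hence bounded, distance sequence for part (i), and for part (ii) use monotonicity to extract the limit $\ell$ of $\lV y^k-\bar y\rV$ first, then identify $\ell=0$ along the convergent subsequence. Your remark about the logical order in part (ii) is also the right point of care; nothing is missing.
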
 

\begin{proof} 
See Theorem 2.16 in \cite{Bauschke:1996}.

\qed \end{proof}

We end this section with the main convergence results for MAP and CRM.

\begin{proposition}\label[proposition]{p2} Take closed and convex sets $K_1,K_2\subset\re^n$ such that $K_1\cap K_2\ne\emptyset$.
Let $P_{K_1}, P_{K_2}$ be the orthogonal projections onto $K_1,K_2$ respectively. Consider the sequence $(z^k)_{k\in\na}$ generated
by MAP starting from any $z^0\in\re^n$, \emph{i.e.}, $z^{k+1}=P_{K_2}(P_{K_1}(z^k))$. Then $(z^k)_{k\in\na}$ is Fej\'er monotone with 
respect to $K_1\cap K_2$ and converges to a point $z^*\in K_1\cap K_2$.
\end{proposition}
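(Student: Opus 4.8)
The plan is to prove the two assertions in sequence: first Fej\'er monotonicity with respect to $K_1\cap K_2$, and then convergence to a point of that set, the latter being an immediate consequence of the former together with Proposition~\ref{p1}.

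For Fej\'er monotonicity, fix an arbitrary $y\in K_1\cap K_2$; I want to show $\lV z^{k+1}-y\rV\le\lV z^k-y\rV$. The key tool is the standard \emph{firm nonexpansiveness} of orthogonal projections onto convex sets: for a closed convex set $K$ and any $u\in\re^n$, $\lV P_K(u)-y\rV\le\lV u-y\rV$ whenever $y\in K$ (this follows from the obtuse-angle characterization of the projection, $\scal{u-P_K(u)}{y-P_K(u)}\le 0$, via the Pythagorean-type inequality $\lV u-y\rV^2=\lV u-P_K(u)\rV^2+\lV P_K(u)-y\rV^2+2\scal{u-P_K(u)}{P_K(u)-y}\ge\lV P_K(u)-y\rV^2$). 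Since $y\in K_1$, applying this with $K=K_1$ and $u=z^k$ gives $\lV P_{K_1}(z^k)-y\rV\le\lV z^k-y\rV$; since $y\in K_2$, applying it with $K=K_2$ and $u=P_{K_1}(z^k)$ gives $\lV P_{K_2}(P_{K_1}(z^k))-y\rV\le\lV P_{K_1}(z^k)-y\rV$. Composing the two inequalities yields $\lV z^{k+1}-y\rV\le\lV z^k-y\rV$, and since $y\in K_1\cap K_2$ was arbitrary, $(z^k)_{k\in\na}$ is Fej\'er monotone with respect to $K_1\cap K_2$.

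For convergence, by Proposition~\ref{p1}(i) the sequence $(z^k)_{k\in\na}$ is bounded, so it has a cluster point $z^*$; say $z^{k_j}\to z^*$. It remains to check that $z^*\in K_1\cap K_2$, since then Proposition~\ref{p1}(ii) forces the whole sequence to converge to $z^*$. To see this, observe that Fej\'er monotonicity makes the real sequence $\lV z^k-y\rV$ nonincreasing for each $y\in K_1\cap K_2$, hence convergent; in particular $\lV z^{k+1}-z^k\rV\to 0$ can be extracted, but more directly one argues as follows. Since $P_{K_1}$ and $P_{K_2}$ are (firmly) nonexpansive, the MAP operator $P_{K_2}\circ P_{K_1}$ is continuous; and using that $z^k$ and $P_{K_1}(z^k)$ and $z^{k+1}$ all stay within bounded distance of any fixed $y\in K_1\cap K_2$, a telescoping of the squared-distance decrease shows $z^{k}-P_{K_1}(z^k)\to 0$ and $P_{K_1}(z^k)-z^{k+1}\to 0$. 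Passing to the limit along the subsequence $k_j$, continuity of $P_{K_1}$ and $P_{K_2}$ gives $z^*=P_{K_1}(z^*)=P_{K_2}(z^*)$, i.e.\ $z^*\in K_1\cap K_2$. Therefore $\lim_{k\to\infty}z^k=z^*\in K_1\cap K_2$.

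The main obstacle is the last step: extracting $z^k-P_{K_1}(z^k)\to 0$ and $P_{K_1}(z^k)-z^{k+1}\to 0$ so that the fixed-point equations survive the limit. This is where one must use the \emph{firm} (not merely plain) nonexpansiveness of the projections, which sharpens each of the two inequalities above to $\lV P_{K}(u)-y\rV^2\le\lV u-y\rV^2-\lV u-P_K(u)\rV^2$; summing over $k$ and telescoping the bounded, monotone sequence $\lV z^k-y\rV^2$ yields $\sum_k\bigl(\lV z^k-P_{K_1}(z^k)\rV^2+\lV P_{K_1}(z^k)-z^{k+1}\rV^2\bigr)<\infty$, which gives the needed convergence to zero. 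As noted in the excerpt, this is Theorem~2.16 (and the surrounding results) in \cite{Bauschke:2006ej}, so in the write-up I would simply cite that reference rather than reproduce the estimates.
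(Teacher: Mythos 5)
Your argument is correct, but it is worth noting that the paper does not actually prove this proposition: it simply cites Theorem~4 of Cheney and Goldstein \cite{Cheney:1959}. What you have written out is essentially that classical proof in self-contained form: Fej\'er monotonicity from nonexpansiveness of each projection at a common point, boundedness and the cluster-point criterion from Proposition~\ref{p1}, and --- the genuinely substantive step --- asymptotic regularity, obtained by telescoping the sharpened inequality $\lV P_K(u)-y\rV^2\le\lV u-y\rV^2-\lV u-P_K(u)\rV^2$ to get $z^k-P_{K_1}(z^k)\to 0$ and $P_{K_1}(z^k)-z^{k+1}\to 0$, after which closedness/continuity of the projections forces any cluster point into $K_1\cap K_2$. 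This is exactly where firm nonexpansiveness (rather than plain nonexpansiveness) is needed, and you identified that correctly; the chain of limits along the subsequence is also handled properly. So your write-up buys a self-contained proof where the paper buys brevity by citation. One small correction on attribution: Theorem~2.16 of \cite{Bauschke:2006ej} is the result on Fej\'er monotone sequences invoked for Proposition~\ref{p1}; the convergence statement you are proving here is the one the paper attributes to \cite{Cheney:1959} (or, in the survey \cite{Bauschke:2006ej}, to the results on alternating projections rather than to Theorem~2.16), so if you replace your estimates by a citation you should cite accordingly.
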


\begin{proof} 
See \cite[Theorem 4]{Cheney:1959}.

\qed \end{proof} 

Let us present the formal definition of the circumcenter. 
\begin{definition}\label[definition]{def:circum}
Let $x,y,z\in\re^n$ be given. The circumcenter $\circum(x,y,z)\in\re^n$ is a point satisfying
\begin{enumerate}[(i), format=\bf,leftmargin=*,align=right, widest=ii]
	\item   $\norm{\circum(x,y,z) - x}=\norm{\circum(x,y,z) - y}=\norm{\circum(x,y,z) - z}$ and,
	\item  $\circum(x,y,z)\in \aff\{x,y,z\}\coloneqq \{w\in\re^n \mid w=x+\alpha (y-x)+\beta (z-x),\;\alpha,\beta\in\re\}$.
\end{enumerate}
\end{definition}

The point $\circum(x,y,z)$ is well and uniquely defined if the cardinality of the set $\{x,y,z\}$ is one or two. In 
the case in which the three points are all distinct,  $\circum(x,y,z)$ is well and uniquely defined  only if $x$, $y$ and $z$ 
are not collinear. For more general notions, definitions and results on circumcenters see \cite{Behling:2020b,Behling:2018}.

Consider now a closed convex set $K\subset\re^n$ and an affine manifold $U\subset\re^n$. 
Let $P_K, P_U$ be the orthogonal projections onto $K,U$ respectively. Define $R_K,R_U, T, C:\re^n\to\re^n$ as
\begin{equation}\label{e1}
R_K=2P_K-\Id, \quad R_U=2P_U-\Id,\quad T = P_U \circ P_K, \quad C(\cdot)=\circum(\cdot,R_K(\cdot), R_U(R_K(\cdot))).
\end{equation}

\begin{proposition}\label[proposition]{prop:nonexpansiveProj}
Let $K\subset \re^n$ be a nonempty closed convex set. Then, the orthogonal projection $P_K$ onto $K$ is firmly nonexpansive, that is, for all  $x,y\in \re^n$ we have
\[
\label{eq:firmlynonexpansive}
\lV P_K(x)-P_K(y)\rV^2\le\lV x-y\rV^2-\lV (x-P_K(x)) - (y - P_K(y))\rV^2.
\]
\end{proposition}
\begin{proof}
See \cite[Theorem 4.16]{Bauschke:2017}.
\end{proof}
\begin{proposition}\label[proposition]{p3}
Assume that $K\cap U\ne\emptyset$. Let $(x^k)_{k\in\na}$ be the sequence generated by CRM starting from any
$x^0\in U$, \emph{i.e.}, $x^{k+1}=C(x^k)$. Then,
\begin{enumerate}[(i), format=\bf,ref=3(\roman*),leftmargin=*,align=right, widest=iii]

	\item \label{p3ii}  for all $x\in U$,  we have that $C(x)$ is well defined  and belongs to $U$;

	\item \label{p3i} for all $x\in U$, it holds that  $\lV C(x) -y\rV\le\lV T(x)-y\rV$, for any $y\in K\cap U$;
	
	\item \label{p3iii} $(x^k)_{k\in\na}$ is Fej\'er monotone with respect to $K\cap U$;
	\item \label{p3iv}  $(x^k)_{k\in\na}$ converges to a point in $K\cap U$.
\end{enumerate}
\end{proposition}

\begin{proof} All these results can be found in \cite{Behling:2020}: (i) is proved in Lemma 3, 
(ii) in Theorem 2, and (iii) and (iv) in Theorem 1.
\qed 
\end{proof} 

\section{Linear convergence of MAP and CRM under an error bound assumption}\label{s2}

We start by introducing an assumption on a pair of convex sets $K,K'\subset\re^n$, 
denoted as EB (as in Error Bound), which will ensure linear convergence of MAP and CRM.

\begin{enumerate}[label=EB.,font=\bfseries,leftmargin=*,ref=EB]
\item \label[assumption]{EB} $K\cap K'\ne\emptyset$ and 
there exists $\omega\in (0,1)$ such that $\dist(x,K)\ge\omega \dist(x,K\cap K')$ for all $x\in K'$.
\end{enumerate}

Now we consider a closed and convex set $K\subset\re^n$ and an affine manifold $U\subset\re^n$.
Assuming that $K,U$ satisfy Assumption \ref{EB}, we will prove linear convergence of the sequences $(z^k)_{k\in\na}$ 
and $(x^k)_{k\in\na}$ generated 
by MAP and CRM, respectively. We start by proving that, for both methods, both distance sequences 
$(\dist(z^k, K\cap U))_{k\in\na}$  and $(\dist(x^k, K\cap U))_{k\in\na}$ converge linearly to $0$,
which will be a corollary of the next proposition.

\begin{proposition}\label[proposition]{p4}
Assume that $K,U$ satisfy \ref{EB}. Consider $T,C:\re^n\to\re^n$ as in \eqref{e1}. Then, for all $x\in U$,
\begin{equation}\label{e2}
(1-\omega^2)\lV x-P_{K\cap U}(x)\rV^2\ge\lV T(x)-P_{K\cap U}(T(x))\rV^2\ge\lV C(x)-P_{K\cap U}(C(x))\rV^2,
\end{equation}
with $\omega$ as in Assumption \ref{EB}.
\end{proposition}

\begin{proof}
It follows easily from \cref{prop:nonexpansiveProj} that
\begin{equation}\label{e4}
\lV P_K(x)-y\rV^2\le\lV x-y\rV^2-\lV x-P_K(x)\rV^2  
\end{equation}
for all $x\in\re^n$ and all $y\in K\cap U\subset K$. Invoking again \cref{prop:nonexpansiveProj}, we get from \eqref{e4}
\begin{align}
\lV T(x)-y\rV^2&=\lV P_U(P_K(x))- y\rV^2\le\lV P_K(x)-y\rV^2-\lV P_U(P_K(x))-P_K(x)\rV^2
\\ &\le
\lV x-y\rV^2-\lV x-P_K(x)\rV^2-\lV P_U(P_K(x))-P_K(x)\rV^2
\\
& \le\lV x-y\rV^2-\lV x-P_K(x)\rV^2=
\lV x-y\rV^2-\dist^2(x,K)\\
& \le\lV x-y\rV^2-\omega^2 \dist^2(x,K\cap U)\label{e5}
\end{align}
for all $x\in U, y\in K\cap U$, using Assumption \ref{EB} in the last inequality. Take now $y=P_{K\cap U}(x)$.
Then, in view of \eqref{e5},
\begin{align}
\lV C(x)-P_{K\cap U}(C(x))\rV^2& \le\lV C(x)-P_{K\cap U}(T(x))\rV^2\\ &
\le\lV T(x)-P_{K\cap U}(T(x))\rV^2
\le\lV T(x)-P_{K\cap U}(x)\rV^2\\
& \le \lV x-P_{K\cap U}(x)\rV^2-\omega^2 \dist^2(x,K\cap U) \\ 
&= (1-\omega^2)\lV x-P_{K\cap U}(x)\rV^2,\label{e6}
\end{align}
using the definition of $P_{K\cap U}$ in the first and the third inequality and  Proposition~\ref{p3}(ii)
in the second inequality. Note that \eqref{e2} follows immediately from \eqref{e6}.

\qed \end{proof}

\begin{corollary}\label[corollary]{c1}
Let $(z^k)_{k\in\na}$ and $(x^k)_{k\in\na}$ be the sequences generated by MAP and CRM starting at any $z^0\in \re^n$ and 
any $x^0\in U$, respectively. If $K,U$ satisfy Assumption
\ref{EB}, then the sequences  $(\dist(z^k,K\cap U))_{k\in\na}$ and $(\dist(x^k,K\cap U))_{k\in\na}$ converge Q-linearly to $0$, and the asymptotic constants
are bounded above by $\sqrt{1-\omega^2}$, with $\omega$ as in Assumption \ref{EB}.
\end{corollary} 

\begin{proof}
In view of the definition of $P_{K\cap U}$, \eqref{e2} can be rewritten as
\begin{equation}\label{e7}
(1-\omega^2)\dist^2(x,K\cap U)\ge \dist^2(T(x),K\cap U)\ge \dist^2(C(x),K\cap U),
\end{equation}
for all $x\in U$.
Since $z^{k+1}=T(z^k)$, we get from the first inequality in \eqref{e7}, 
\[(1-\omega^2)\dist^2(z^k,K\cap U)\ge \dist^2(z^{k+1},K\cap U),\] using the fact that $(z^k)_{k\in\na}\subset U$. Hence 
\begin{equation}\label{e8}
\frac{\dist(z^{k+1},K\cap U)}{\dist(z^k,K\cap U)}\le \sqrt{1-\omega^2}.
\end{equation}
By the same token, using
the second inequality in \eqref{e7} and \cref{p3}(ii), we get
\begin{equation}\label{e9}
\frac{\dist(x^{k+1},K\cap U)}{\dist(x^k,K\cap U)}\le \sqrt{1-\omega^2}.
\end{equation}
 The inequalities  in \eqref{e8} and \eqref{e9} imply the result.

\qed \end{proof}

We remark that the result for MAP holds when $U$ is any closed and convex set, 
not necessarily an affine manifold. We need $U$ to be an affine manifold in \cref{p3}
(otherwise, $(x^k)_{k\in\na}$ may even diverge), but this proposition is used in our proofs only
when the CRM sequence is involved.

Next, we  show that, under Assumption~\ref{EB}, CRM achieves a linear rate with an asymptotic constant better 
than the one given in \cref{c1}.

\begin{proposition}\label[proposition]{prop:CRMBetterRate}
  Let $(x^k)_{k\in\na}$ be the sequence generated by  CRM starting at any $x^0\in U$. 
	If $K,U$ satisfy Assumption \ref{EB}, then the sequence  $(\dist(x^k,K\cap U))_{k\in\na}$ Q-converges to $0$ 
	with the asymptotic constant bounded above by 
  \(\sqrt{\dfrac{1-\omega^2}{1+\omega^{2}}},
  \) where $\omega$ is as in Assumption \ref{EB}.
\end{proposition}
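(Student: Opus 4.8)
The plan is to squeeze out a better per-iteration contraction factor for the CRM sequence by exploiting the geometry of the circumcenter more carefully than in the proof of Corollary~\ref{c1}. There, inequality~\eqref{e7} was obtained by merely combining the myopic estimate $\lV C(x)-y\rV\le\lV T(x)-y\rV$ (Proposition~\ref{p3i}) with the MAP bound~\eqref{e5}. To do better, I would recall that, by definition, $C(x)$ is \emph{equidistant} from the three points $x$, $R_K(x)$ and $R_U(R_K(x))$, and lies in the affine hull $\aff\{x,R_K(x),R_U(R_K(x))\}$. The key observation in \cite{Behling:2020} is that $P_K(x)=\tfrac12(x+R_K(x))$ lies on the segment joining $x$ and $R_K(x)$, and $T(x)=P_U(P_K(x))$ lies on that affine hull as well; moreover the segment from $x$ to $R_K(x)$ is orthogonal to the relevant subspace in a way that makes $C(x)$ the circumcenter of $\{x, R_K(x)\}$ after one further reflection. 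Concretely, I expect a relation of the form
\begin{equation*}
\lV C(x)-y\rV^2 = \lV T(x)-y\rV^2 - \lV C(x)-T(x)\rV^2
\end{equation*}
for $y\in K\cap U$, or something equivalent, which upgrades Proposition~\ref{p3i} to an \emph{identity plus a correction term} rather than a mere inequality.

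The second ingredient would be to lower-bound the correction term $\lV C(x)-T(x)\rV^2$ in terms of $\dist^2(x,K\cap U)$, again using Assumption~\ref{EB}. Here the geometry is: the three points $x, R_K(x), R_U(R_K(x))$ determine a (possibly degenerate) triangle; one side has length $\lV x - R_K(x)\rV = 2\dist(x,K)$, and the circumcenter is displaced from $T(x)$ by an amount controlled by how "flat" this triangle is, i.e. by the angle at which $U$ and $K$ meet. Since $\dist(x,K)\ge\omega\,\dist(x,K\cap U)$ by~\ref{EB}, and $\lV x - T(x)\rV$ is itself comparable to $\dist(x,K\cap U)$ from the other side (by Fej\'er-type estimates, $\lV x-T(x)\rV\le\dist(x,K\cap U)$ is not quite what I want — rather I want a lower bound), I would aim to show $\lV C(x)-T(x)\rV^2 \ge \omega^2 \lV T(x)-P_{K\cap U}(T(x))\rV^2$ or a similar bound that, when substituted, turns $\dist^2(C(x),K\cap U)\le(1-\omega^2)\dist^2(x,K\cap U)$ into
\begin{equation*}
(1+\omega^2)\,\dist^2(C(x),K\cap U)\le (1-\omega^2)\,\dist^2(x,K\cap U),
\end{equation*}
which is exactly the claimed factor $\sqrt{(1-\omega^2)/(1+\omega^2)}$ after taking square roots and using that the CRM iterates stay in $U$ (Proposition~\ref{p3ii}).

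The main obstacle I anticipate is the bookkeeping of the circumcenter's position relative to $T(x)$: one must handle the degenerate cases (when $x$, $R_K(x)$, $R_U(R_K(x))$ are collinear, which is precisely when $C(x)=T(x)$ and the inequality must still hold trivially) and, more seriously, must express $\lV C(x)-T(x)\rV$ quantitatively. I would use coordinates adapted to the problem — translate so that $P_{K\cap U}(x)$ (or a suitable point of $K\cap U$) is the origin, decompose everything along the direction $x - P_K(x)$ and within $U$ — and then compute the circumcenter explicitly as the solution of the two equidistance equations. The identity $\lV C(x)-y\rV^2=\lV T(x)-y\rV^2-\lV C(x)-T(x)\rV^2$ should follow from a Pythagorean argument once one checks that $C(x)-T(x)$ is orthogonal to $T(x)-y$ for $y\in K\cap U$; establishing that orthogonality cleanly is the crux. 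Once these two estimates are in hand, the telescoping argument is identical in form to the proof of Corollary~\ref{c1}: apply the one-step inequality with $x=x^k$, use $x^k\in U$, and pass to the $\limsup$ of the ratios.
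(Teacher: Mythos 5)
Your target per-step inequality, $(1+\omega^2)\dist^2(C(x),K\cap U)\le(1-\omega^2)\dist^2(x,K\cap U)$, is exactly the one the paper establishes, and your first ingredient is right in spirit but misstated: $C(x)-T(x)$ is \emph{not} orthogonal to $T(x)-y$; if it were, you would get $\lV C(x)-y\rV^2=\lV T(x)-y\rV^2+\lV C(x)-T(x)\rV^2$, the reverse of what you need (and a contradiction with Proposition \ref{p3i}). What is actually available is an obtuse-angle \emph{inequality} at $C(x)$: by Lemma 3 of \cite{Behling:2020}, $C(x)=P_{H_x\cap U}(x)$ with $H_x=\{y\mid\scal{y-P_K(x)}{x-P_K(x)}\le 0\}\supset K$, and since $T(x)-C(x)$ is a nonnegative multiple of $x-C(x)$ (collinearity with $C(x)$ beyond $T(x)$), one gets $\scal{y-C(x)}{T(x)-C(x)}\le 0$ for $y\in K\cap U$, hence $\lV T(x)-y\rV^2\ge\lV C(x)-y\rV^2+\lV C(x)-T(x)\rV^2$. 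This is the correct, fixable version of your ``identity plus correction term.''

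The genuine gap is your second ingredient. The proposed bound $\lV C(x)-T(x)\rV^2\ge\omega^2\dist^2(T(x),K\cap U)$ (or the variant with $C(x)$ in place of $T(x)$) is false in general, and no argument for it is sketched. Take $n=1$, $U$ the horizontal axis in $\re^2$, and $K=\epi g$ with $g(a)=\max(a,0)+M\max(a-1,0)$, $M$ large (the kink can be smoothed). Since $g(a)\ge a$ for all $a$, $K\subset\{(a,b)\mid b\ge a\}$ and \ref{EB} holds with $\omega=1/\sqrt2$. Yet for $x=(t,0)$ with $t$ slightly larger than $M+2$, the projection lands just past the kink at $u=1+\delta$, so $T(x)=(u,0)$, $C(x)=(u-g(u)/g'(u),0)$, giving $\lV C(x)-T(x)\rV\approx 1/(M+1)$ while $\dist(T(x),K\cap U)\approx\dist(C(x),K\cap U)\approx 1$: the lateral displacement of the circumcenter can be arbitrarily small with $\omega$ fixed, so your route would only recover the Corollary \ref{c1} constant $\sqrt{1-\omega^2}$. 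The paper avoids this entirely: it never lower-bounds $\lV C(x)-T(x)\rV$ by itself, but keeps the vertical component and uses $\lV C(x)-T(x)\rV^2+\lV T(x)-P_K(x)\rV^2=\lV C(x)-P_K(x)\rV^2\ge\dist^2(C(x),K)\ge\omega^2\dist^2(C(x),K\cap U)$, and it invokes \ref{EB} a \emph{second} time at $x$ itself, starting the chain from $\omega^2\dist^2(x,K\cap U)\le\dist^2(x,K)\le\lV x-y^*\rV^2-\lV C(x)-y^*\rV^2-\lV C(x)-P_K(x)\rV^2$ with $y^*=P_{K\cap U}(x)$; rearranging is what produces the factor $1+\omega^2$. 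So the decomposition you need is through $P_K(x)$, not through $T(x)$ alone, and piggybacking on the MAP estimate $\dist^2(T(x),K\cap U)\le(1-\omega^2)\dist^2(x,K\cap U)$ cannot be repaired by a lateral-displacement bound.
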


\begin{proof}
Take $y^*\in K\cap U$ and $x\in U$. Note that 
\begin{align}
\dist^2(x,K)&=\lV x-P_K(x)\rV^2 %=\lV(x-y^*)-(P_K(x)-y^*)\rV^2
\\
&\le\lV x-y^*\rV^2-\lV P_K(x)-y^*\rV^2\\
&=\lV x-y^*\rV^2-\lV P_K(x)-P_K(y^*)\rV^2\\
&\le\lV x-y^*\rV^2-\lV P_U(P_K(x))-P_U(P_K(y^*))\rV^2-\lV P_U(P_K(x))-P_K(x)\rV^2\\
&=\lV x-y^*\rV^2-\lV P_U(P_K(x))-y^*\rV^2-\lV P_U(P_K(x))-P_K(x)\rV^2,\label{eq*}
\end{align}
using the definition of orthogonal projection onto $K$, the fact that $y^*\in K$ and \cref{prop:nonexpansiveProj} in the first inequality, and again 
\cref{prop:nonexpansiveProj} regarding $U$ in the second inequality.

Now, we will invoke some results from \cite{Behling:2020} to prove that $C(x)$ is indeed the orthogonal
projection of $x$ onto the intersection of $U$ with the halfspace $H_x^+\coloneqq \{y\in \re^n \mid \scal{y-P_K(x)}{ x-P_K(x)}\le 0\}$ containing $K$. In \cite[Lemma 3]{Behling:2020}  it is proved that  
$C(x)=P_{H_x\cap U}(x)$ with $H_x\coloneqq \{y\in \re^n \mid \scal{y-P_K(x)}{ x-P_K(x)}= 0\}$. Using the arguments employed at the beginning  of the  proof of \cite[Lemma 5]{Behling:2020}, we get \[%\label{eq:C(x)=ProjHx}
C(x) = P_{H_x\cap U}(x) = P_{H_x^+\cap U}(x).
\]
Hence, the above equality  and the fact that $y^*\in K\cap U \subset H_x^+\cap U$ imply $\scal{y^*-C(x)}{x-C(x)}\le 0$ and since $x$, $P_U(P_K(x))$ and $C(x)$ are collinear (see  \cite[Eq. (7)]{Behling:2020}), 
we get $\scal{y^*-C(x)}{P_U(P_K(x))-C(x)}\le 0$. Thus,
\[\label{fifi} 
\|P_U(P_K(x))-y^*\|^2\ge \|C(x)-y^*\|^2+\|C(x)-P_U(P_K(x))\|^2.
\]
Now, \eqref{fifi} and \eqref{eq*} imply
\begin{align}
\dist^2(x,K)&\le  \lV x-y^*\rV^2-\lV C(x)-y^*\rV^2-\lV C(x)-P_U(P_K(x))\rV^2-\lV P_U(P_K(x))-P_K(x)\rV^2\\
&=\lV x-y^*\rV^2-\lV C(x)-y^*\rV^2-\lV C(x)-P_K(x)\rV^2\\
&\le  \lV x-y^*\rV^2-\lV C(x)-y^*\rV^2-\dist^2(C(x), K)\\
&\le  \lV x-y^*\rV^2-\dist^2(C(x),K\cap U) -\dist^2(C(x), K),
\end{align}
using the definition of the distance in the last two inequalities. Now, taking $y^*=P_{K\cap U}(x)$ and using the error 
bound condition for $x$ and $C(x)$, we obtain
\begin{align}
\omega^2\dist^2(x,K\cap U)\le \dist^2(x,K)&\le  \dist^2(x,K\cap U)-\dist^2(C(x),K\cap U) -\omega^2\dist^2(C(x), K\cap U)\\
&=  \dist^2(x,K\cap U)- (1+\omega^2)\dist^2(C(x), K\cap U).\label{fufu}
\end{align}
Rearranging \eqref{fufu}, we get $(1+\omega^2)\dist^2(C(x), K\cap U)\le (1-\omega^2)\dist^2(x,K\cap U) $ and,  
since $x^{k+1}=C(x^k)$, we have
\begin{align}
\frac{\dist(x^{k+1}, K\cap U)}{\dist(x^{k},K\cap U)}\le \sqrt{ \frac{1-\omega^2}{1+\omega^2}}, 
\end{align}
which implies the result.
\qed \end{proof}

\cref{p4,prop:CRMBetterRate} do not entail immediately that the sequences $(x^k)_{k\in\na}, (z^k)_{k\in\na}$ themselves
converge linearly; a sequence $(y^k)_{k\in\na}\subset\re^n$ may converge to a point $y\in M\subset\re^n$,
in such a way that $(\dist(y^k, M))_{k\in\na}$ converges linearly to $0$ but $(y^k)_{k\in\na}$ itself converges sublinearly.
Take for instance $M=\{(s,0)\in\re^2\}$, $y^k=\left(1/k,2^{-k}\right)$. This sequence converges
to $0\in M$, $\dist(y^k,M)=2^{-k}$ converges linearly to $0$ with asymptotic constant equal to $1/2$, but
the first component of $y^k$ converges to $0$ sublinearly, and hence the same holds for the sequence
$(y^k)_{k\in\na}$. The next lemma, possibly of some interest on its own, establishes that this situation
cannot occur when $(y^k)_{k\in\na}$ is Fej\'er monotone with respect to $M$. The result below is similar to \cite[Theorem 5.12]{Bauschke:2017}, however we include its proof for the sake of completeness.

\begin{lemma}\label[lemma]{l1} 
Consider a nonempty closed convex set $M\subset\re^n$ and  $(y^k)_{k\in\na}\subset\re^n$. Assume that $(y^k)_{k\in\na}$ is Fej\'er
monotone with respect to $M$, and that $(\dist(y^k,M))_{k\in\na}$ converges R-linearly to $0$. Then $(y^k)_{k\in\na}$ converges 
R-linearly to some point $y^*\in M$, with asymptotic constant bounded above by the 
asymptotic constant of $(\dist(y^k,M))_{k\in\na}$.
\end{lemma}

\begin{proof}
Fix $k\in\na$ and note that the Fej\'er monotonicity hypothesis implies that, for all $j\ge k$, 
\begin{equation}\label{e10}
\lV y^j-P_M(y^k)\rV\le\lV y^k-P_M(y^k)\rV=\dist(y^k, M).
\end{equation}
By \cref{p1}(i), $(y^k)_{k\in\na}$ is bounded. Take any cluster point $\bar y$ of $(y^k)_{k\in\na}$.
Taking limits with $j\to\infty$ in \eqref{e10} along a subsequence $(y^{k_j})_{j\in\na}$ of $(y^k)_{k\in\na}$ converging
to $\bar y$, we get that $\lV\bar y-P_M(y^k)\rV\le \dist(y^k,M)$. Since $\lim_{k\to\infty}\dist(y^k,M)=0$,
we conclude that $(P_M(y^k))_{k\in\na}$ converges to $\bar y$, so that there exists a unique cluster point, say $y^*$.
Therefore, $\lim _{k\to\infty}y^k=y^*$, and hence $\lV y^*-P_M(y^k)\rV\le \dist(y^k,M)$. Since 
$y^*=\lim_{k\to\infty}P_M(y^k)$, we conclude that $y^*\in M$.
Observe further that
\begin{equation}\label{e11}
\lV y^k-y^*\rV\le\lV y^k-P_M(y^k)\rV+\lV P_M(y^k)-y^*\rV=\dist(y^k,M)+\lV y^*-P_M(y^k)\rV\le 2\dist(y^k,M).
\end{equation}
Taking $k$th-root and then $\limsup$ with $k\to\infty$ in \eqref{e11}, and using the R-linearity hypothesis,
\begin{align}
\limsup_{k\to\infty}\lV y^k-y^*\rV^{1/k}& \le\limsup_{k\to\infty}2^{1/k}\dist(y^k, M)^{1/k}\\
&= 
\limsup_{k\to\infty}\dist(y^k,M)^{1/k}<1, 
\end{align}
establishing both that $(y^k)_{k\in\na}$ converges R-linearly to $y^*\in M$ and the statement on the asymptotic constant.

\qed \end{proof}

With the help of \cref{l1}, we prove next  the R-linear convergence of the MAP and CRM sequences
under Assumption \ref{EB}, and give bounds for their asymptotic constants.

\begin{theorem} \label{t1}
Consider a closed and convex set $K\subset\re^n$ and an affine manifold $U\subset \re^n$. Assume that
$K,U$ satisfy Assumption \ref{EB}. Let $(z^k)_{k\in\na}, (x^k)_{k\in\na}$ be the sequences generated by MAP and CRM, respectively, 
starting from arbitrary points $z^0\in\re^n, x^0\in U$. Then both sequences $(z^k)_{k\in\na}$ and $(x^k)_{k\in\na}$ converge R-linearly to
points in $K\cap U$,
and the asymptotic constants are bounded above by $\sqrt{1-\omega^2}$ for MAP, and by $\sqrt{\dfrac{1-\omega^2}{1+\omega^{2}}}$ 
for CRM, with $\omega$ as in Assumption \ref{EB}.
\end{theorem}  
   
\begin{proof} In view of  \cref{p2,p3}(iii) and (iv), both 
sequences are Fej\'er monotone with respect to ${K\cap U}$ and converge to points in ${K\cap U}$. By \cref{c1},
both sequences  $(\dist(z^k,{K\cap U}))_{k\in\na}$ and $(\dist(x^k,{K\cap U}))_{k\in\na}$ are Q-linearly
convergent to $0$, and henceforth R-linearly convergent to $0$.  \cref{c1} shows that the asymptotic constant of 
the sequence $(\dist(z^k,{K\cap U}))_{k\in\na}$ is 
bounded above by $\sqrt{1-\omega^2}$, and \cref{prop:CRMBetterRate} establishes that the asymptotic constant of the sequence
$(\dist(x^k,{K\cap U}))_{k\in\na}$ 
is bounded above by $\sqrt{{1-\omega^2}}/\sqrt{{1+\omega^{2}}}$. Finally, by \cref{l1}, both 
$(z^k)_{k\in\na}$  and $(x^k)_{k\in\na}$ are R-linearly
convergent, with the announced bounds for their asymptotic constants.
\qed \end{proof}

We remark that, in view of \cref{t1}, the upper bound for the asymptotic constant of the CRM sequence is substantially 
better than the one for the MAP sequence.  Note that the CRM bound reduces the MAP one by a factor of $\sqrt{1+\omega^2}$, 
which increases up to $\sqrt{2}$ when $\omega$ approaches $1$.

\section{Two families of examples for which CRM is much faster than MAP}\label{s3}

We will present now two rather generic families of examples for which CRM is faster than MAP.
In the first one, MAP converges sublinearly while CRM converges linearly; in the
second one, MAP converges linearly and CRM converges superlinearly. 

In both families, we work in $\re^{n+1}$. $K$ will be the epigraph of a proper convex function 
$f:\re^n\to\re\cup\{+\infty\}$, and $U$ the hyperplane $\{(x,0)\mid  x\in\re^n\}\subset\re^{n+1}$. From now on, we 
consider $f$  to be continuously differentiable in $\inte(\dom(f))$, where  
$\dom(f)\coloneqq \{x\in\re^n\mid  f(x)<+\infty\}$ and  $\inte(\dom(f))$ is its topological interior. 
Next, we make the following assumptions on
$f$:
\begin{enumerate}[label=A\arabic*.,font=\bf,leftmargin=*,ref=A\arabic*]
	% \item \label{A1} $0\in\inte(\dom(f))$, where $\dom(f)=\{x\in\re^n: f(x)<+\infty\}$ and 	$\inte(\dom(f))$ is its topological interior.  
	\item \label{A2} $0\in \inte(\dom(f))$ is the unique minimizer of $f$. 
	\item \label{A3} $\nabla f(0)=0$.
	\item \label{A4} $f(0)=0$.
\end{enumerate}

We will show that under these assumptions, MAP always converges sublinearly, 
while, adding an additional hypothesis, CRM converges linearly.
  
Note that under hypotheses \labelcref{A2,A3,A4}, $0\in\re^{n}$ is the unique 
zero of $f$ and hence $K\cap U=\{(0,0)\}\subset \re^{n+1}$. In view of   \cref{p2,p3}, the sequences
generated by MAP and CRM, with arbitrary initial points in $\re^{n+1}$ and $U$, respectively, 
both converge to $(0,0)$, and are Fej\'er monotone with respect to $\{(0,0)\}$, so that, in view of \ref{A2}, for large enough $k$
the iterates of both sequences belong to $\inte(\dom(f))\times \re$. 
 We take now any point $(x,0)\in U$, with $x\ne 0$ and proceed to
compute $P_K(x,0)$. Since $(x,0)\notin K$ (because $x\ne 0$ and $K\cap U=\{(0,0)\}$), $P_K(x,0)$ must belong
to the boundary of $K$, \emph{i.e.}, it must be of the form $(u,f(u))$, and $u$ is determined by minimizing $\lV (x,0)-(u,f(u))\rV^2$,
so that $u-x +f(u)\nabla f(u) =0$, or equivalently 
\begin{equation}\label{e12}
x=u+f(u)\nabla f(u).
\end{equation}
Note that since $x\ne 0$, $u\ne 0$ by \ref{A4}. With the notation of \cref{s2} and bearing in mind that $T$ and $C$ are the MAP and CRM operators defined in \eqref{e1},  it is easy to check that
\[\label{e13}
\begin{aligned}
P_K(x,0) &=(u,f(u)),\text{ and } \\ 
T(x,0)&=P_U(P_K(x,0))= (u,0),
\end{aligned}
\]
with $u$ as in \eqref{e12}. Moreover, 
\[ \begin{aligned}
R_K(x,0)&=(2u-x,2f(u)), \\
P_U(R_K(x,0))&= (2u-x,0), \text{ and } \\
  R_U(R_K(x,0))&=(2u-x,-2f(u)).
\end{aligned}
\]
Next we compute $C(x,0)=\circum((x,0),R_K(x,0), R_U(R_K(x,0)))$. Suppose that $C(x,0)=(v,s)$.
The conditions $\lV (v,s)-(x,0)\rV=\lV (v,s)-R_K(x,0)\rV=\lV (v,s)-R_U(R_K(x,0))\rV$ give rise to two
quadratic equations whose solution is
\begin{equation}\label{e14}
s=0,\qquad v=u-\left[\frac{f(u)}{\lV x-u\rV}\right]^2(x-u)=u -\frac{f(u)}{\lV\nabla f(u)\rV^2}\nabla f(u),
\end{equation}
using \eqref{e12} in the last equality. 

We proceed to compute the quotients $\lV T(x,0)-0\rV/\lV (x,0)-0\rV$,
$\lV C(x,0)-0\rV/\lV (x,0)-0\rV$. Since both the MAP and the CRM sequences converge to $0$, 
these quotients are needed for determining 
their convergence rates. In view of \eqref{e13} and \eqref{e14}, these quotients reduce to $\lV u\rV/\lV x\rV,
\lV v\rV/\lV x\rV$. We state the result of the computation of these quotients in the next proposition.

\begin{proposition}\label[proposition]{p5} 
Take $(x,0)\in U$ with $x\ne 0$. Let $T(x,0)=(u,0)$ and $C(x,0)=(v,0)$. Then,
\begin{equation}\label{e15}
\frac{\lV T(x,0)\rV}{\lV (x,0)\rV}=\frac{\lV u\rV}{\lV x\rV}=\frac{1}{\lV \bar u+\dfrac{f(u)}{\lV u\rV}\nabla f(u)\rV}
\end{equation}
with $\bar u=u/\lV u\rV$, 
\begin{equation}\label{e16}
\left[\frac{\lV C(x,0)\rV}{\lV T(x,0)\rV}\right]^2=\left[\frac{\lV v\rV}{\lV u\rV}\right]^2\le 
1-\left[\frac{f(u)}{\lV u\rV\,\lV \nabla f(u)\rV}\right]^2,
\end{equation}
and
\begin{equation}\label{ee16}
\left[\frac{\lV C(x,0)\rV}{\lV (x,0)\rV}\right]^2\le
\left[1-\left(\frac{f(u)}{\lV u\rV\,\lV \nabla f(u)\rV}\right)^2\right]\left[\frac{\lV u\rV}{\lV x\rV}\right]^2.
\end{equation}
\end{proposition} 

\begin{proof}
In view of \eqref{e12},
\[
\frac{\lV u\rV}{\lV x\rV}=\frac{\lV u\rV}{\lV u+f(u)\nabla f(u)\rV}
\] 
and \eqref{e15} follows by dividing  the numerator and the denominator by $\lV u\rV$.

We proceed to establish \eqref{e16}. In view of \eqref{e14}, we have
\begin{align}
\lV v\rV^2&=\lV u -\frac{f(u)}{\lV\nabla f(u)\rV^2}\nabla f(u)\rV^2\\&=\lV u\rV^2+\left[\frac{f(u)}{\lV\nabla f(u)\rV}\right]^2-
2\frac{f(u)}{\lV\nabla f(u)\rV^2}\langle\nabla f(u),u\rangle\\&\le
\lV u\rV^2+\left[\frac{f(u)}{\lV\nabla f(u)\rV}\right]^2-2\left[\frac{f(u)}{\lV\nabla f(u)\rV}\right]^2\\&=
\lV u\rV^2-\left[\frac{f(u)}{\lV\nabla f(u)\rV}\right]^2,\label{e17}
\end{align}
using the gradient inequality $\langle\nabla f(u),u\rangle\ge f(u)$, which holds because $f$ is convex and $f(0)=0$.  
Now, \eqref{e16} follows by dividing \eqref{e17} by $\lV u\rV^2$.
Finally, \eqref{ee16} follows by multiplying \eqref{e16} by $\dfrac{\lV u\rV^2}{\|x\|^2}=\dfrac{\lV T(x,0)\rV^2}{\|(x,0)\|^2}$.

\qed \end{proof}

Next we compute the limits with $x\to  0$ of the quotients in \cref{p5}.

\begin{proposition}\label[proposition]{p6} 
Take $(x,0)\in U$ with $x\ne 0$. Let $T(x,0)=(u,0)$ and $C(x,0)=(v,0)$. Then,
\begin{equation}\label{e18}
\limsup_{x\to 0}\frac{\lV T(x,0)\rV}{\lV (x,0)\rV}=\lim_{x\to 0}\frac{\lV u\rV}{\lV x\rV}=1
\end{equation}
and
\begin{equation}\label{e19}
\limsup_{x\to 0}\left[\frac{\lV C(x,0)\rV}{\lV (x,0)\rV}\right]^2=\limsup_{x\to 0}\left[\frac{\lV v\rV}{\lV x\rV}\right]^2\le
1-\liminf_{x\to 0}\left[\frac{f(x)}{\lV x\rV\,\lV \nabla f(x)\rV}\right]^2.
\end{equation}
\end{proposition} 
\begin{proof} 
By convexity of $f$, using \ref{A4}, 
$f(y)\le\langle \nabla f(y),y\rangle\le \lV\nabla f(y)\rV\,\lV y\rV$ for all $y\in\inte(\dom(f))$ sufficiently close to $0$.
Hence, for all nonzero $y\in\inte(\dom(f))$, $0< f(y)/\lV y\rV\le\lV\nabla f(y)\rV$, using \labelcref{A2,A4}.
Since $\lim_{y\to 0}\nabla f(y)=0$ by \labelcref{A2,A3} and the convexity of $f$, it follows that
\begin{equation}\label{e20}{}
\lim_{y\to 0} f(y)/\lV y\rV=0.
\end{equation}
Now we take limits with $x\to 0$ in \eqref{e15}. Since $(u,0)=P_K((x,0))$ and using the continuity of projections,
$\lim_{x\to 0}u=0$. Thus,
\begin{align}
\limsup_{x\to 0}\frac{\lV T(x,0)\rV}{\lV (x,0)\rV} &= \lim_{x\to 0}\frac{\lV T(x,0)\rV}{\lV (x,0)\rV} =\lim_{x\to 0}\frac{1}{\lV \bar u+\dfrac{f(u)}{\lV u\rV}\nabla f(u)\rV} \\
& =\lim_{u\to 0}\frac{1}{\lV \bar u+\dfrac{f(u)}{\lV u\rV}\nabla f(u)\rV}=\frac{1}{\lV \bar u\rV}=1,
\end{align}
using \eqref{e20} and the fact that $\lV\bar u\rV = \lV u/\norm{u}\rV = 1$. We have proved that \eqref{e18} holds.
Now we deal with \eqref{e19}. Taking  $x\to 0$ in \eqref{ee16}, we have
\begin{equation}\label{e21}
\limsup_{x\to 0}\left[\frac{\lV C(x,0)\rV}{\lV (x,0)\rV}\right]^2\le\left[1-\liminf_{x\to 0}\left(\frac{f(u)}{\lV u\rV\,\lV\nabla f(u)\rV}\right)^2\right]
\limsup_{x\to 0}\left[\frac{\lV u\rV}{\lV x\rV}\right]^2.
\end{equation}
The second $\limsup$ on the right-hand side of  \eqref{e21} is equal to $\lim_{x\to 0}\left[\frac{\lV u\rV}{\lV x\rV}\right]^2$ and by \eqref{e18} it is equal to $1$ and so \eqref{e19} follows from the 
already made observation that $\lim_{x\to 0}u=0$.

\qed \end{proof}

We proceed to establish the convergence rates of the sequences generated by MAP and CRM for this choice of $K$ and $U$.

\begin{corollary}\label[corollary]{c2} Consider $K, U\subset\re^{n+1}$ given by $K= \epi(f)$, with 
$f:\re^n\to\re\cup\{+\infty\}$ satisfying
\labelcref{A2,A3,A4} and $U\coloneqq\{(x,0)\mid x\in\re^n\}\subset\re^{n+1}$. Let  $(z^k,0)_{k\in\na}$ and $(x^k,0)_{k\in\na}$, be the sequences generated by MAP and CRM, starting 
from $(z^0,0)\in\re^{n+1}$ and $(x^0,0) \in U$, respectively. Then,
\begin{equation}\label{e22}
\limsup_{k\to\infty}\frac{\lV (z^{k+1},0)\rV}{\lV (z^k,0)\rV}=1
\end{equation}
and
\begin{equation}\label{e23}
\limsup_{k\to\infty}\frac{\lV (x^{k+1},0)\rV}{\lV (x^k,0)\rV}\le\sqrt{1-\gamma^2},
\end{equation}
with 
\begin{equation}\label{e24}
\gamma\coloneqq \liminf_{x\to 0}\frac{f(x)}{\lV x\rV\,\lV\nabla f(x)\rV}.
\end{equation}
\end{corollary}

\begin{proof}
Since $T(z^k,0)=(z^{k+1},0)$, $C(x^k,0)=(x^{k+1},0)$, and $\lim_{k\to\infty}x^k=\lim_{k\to\infty}z^k=0$,
it suffices to apply  \cref{p6} with  $x=z^k$ in \eqref{e18} and $x=x^k$ in \eqref{e19}.

\qed \end{proof}

We add now an additional hypothesis on $f$.
\begin{enumerate}[resume*]
	\item \label{A5} $f$ satisfies $
	\displaystyle\liminf_{x\to 0}\dfrac{f(x)}{\lV x\rV\,\lV\nabla f(x)\rV}>0.$
\end{enumerate} 

Observe that by using the convexity of $f$ and Cauchy-Schwarz inequality,  we have
\[
0\leq f(x)\leq \scal{\nabla f(x)}{x} \leq \norm{\nabla f(x)}\norm{x}.
\]
Thus, \labelcref{A2,A3,A4} imply $f(x)/(\lV x\rV\,\lV\nabla f(x)\rV)\in (0,1]$, for all $x\ne 0$, so that
\ref{A5} just excludes the case in which the $\liminf$ above is equal to $0$. Next we rephrase \cref{c2}.

\begin{corollary}\label[corollary]{c3}
Consider $K, U\subset\re^{n+1}$ given by $K= \epi(f)$, with 
$f:\re^n\to\re\cup\{+\infty\}$ satisfying
\labelcref{A2,A3,A4} and $U\coloneqq\{(x,0)\mid x\in\re^n\}\subset\re^{n+1}$. Then the sequence generated by MAP from an arbitrary initial 
point converges sublinearly. If $f$ also satisfies hypothesis \ref{A5}, then the sequence generated by CRM
from an initial point in $U$ converges linearly, and its asymptotic constant is bounded above by $\sqrt{1-\gamma^2}<1$,
with $\gamma$ as in \eqref{e24}.
\end{corollary}
\begin{proof}
Immediate from \cref{c2} and hypothesis \ref{A5}.

\qed \end{proof}
 
Next we discuss several situations for which hypothesis \ref{A5} holds, showing that it is rather generic. The first case is as
follows.

\begin{proposition}\label[proposition]{p7} 
Assume that $f$, besides satisfying  \labelcref{A2,A3,A4}, is of class ${\cal C}^2$ and
$\nabla^2f(0)$ is nonsingular. Then, assumption \ref{A5} holds, and $\gamma\ge\lambda_{\min}/(2\lambda_{\max})>0$ where
$\lambda_{\min}, \lambda_{\max}$ are the smallest and largest eigenvalues of $\nabla f^2(0)$, respectively.
\end{proposition}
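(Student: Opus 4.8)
The plan is to estimate the quantity $\gamma = \lim_{x\to 0} f(x)/(\lV x\rV\,\lV\nabla f(x)\rV)$ from below using the second-order Taylor expansion of $f$ at the origin, exploiting hypotheses \labelcref{A2,A3,A4} (which give $f(0)=0$, $\nabla f(0)=0$) and the $\mathcal C^2$ regularity together with the nonsingularity of $H\coloneqq\nabla^2 f(0)$. Note that since $0$ is the unique minimizer and $H$ is nonsingular, $H$ is positive definite, so $\lambda_{\min}=\lambda_{\min}(H)>0$. Writing $H = \nabla^2 f(0)$, Taylor's theorem with the Peano remainder yields, as $x\to 0$,
\[
f(x) = \tfrac12\scal{Hx}{x} + o(\lV x\rV^2), \qquad \nabla f(x) = Hx + o(\lV x\rV).
\]
The first of these immediately gives $f(x)\ge \tfrac12\lambda_{\min}\lV x\rV^2 + o(\lV x\rV^2)$, and the second gives $\lV\nabla f(x)\rV \le \lambda_{\max}\lV x\rV + o(\lV x\rV)$, where $\lambda_{\max}=\lambda_{\max}(H)$.

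Combining the two estimates, for $x\ne 0$ near the origin,
\[
\frac{f(x)}{\lV x\rV\,\lV\nabla f(x)\rV} \ge \frac{\tfrac12\lambda_{\min}\lV x\rV^2 + o(\lV x\rV^2)}{\lV x\rV\,\bigl(\lambda_{\max}\lV x\rV + o(\lV x\rV)\bigr)} = \frac{\tfrac12\lambda_{\min} + o(1)}{\lambda_{\max} + o(1)}.
\]
Taking the limit as $x\to 0$ gives $\gamma \ge \lambda_{\min}/(2\lambda_{\max}) > 0$, which in particular shows that the $\limsup$ in \ref{A5} is positive (the $\limsup$ over $x\to\infty$ dominates the genuine limit as $x\to 0$ only if one interprets \ref{A5} via the limit at the origin; more carefully, since the quantity lies in $(0,1]$ everywhere and its limit at $0$ is bounded below by a positive constant, hypothesis \ref{A5} — which asks merely that the relevant $\limsup$ be nonzero — is satisfied). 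I would state the conclusion as: \ref{A5} holds and $\gamma\ge \lambda_{\min}/(2\lambda_{\max})$.

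The only delicate point is handling the $o(\cdot)$ terms rigorously in the denominator: one must ensure $\lV\nabla f(x)\rV>0$ for $x\ne 0$ near $0$ (true since $Hx\ne 0$ for $x\ne 0$ and the remainder is lower order, or alternatively since $0$ is the unique critical point by \ref{A2}), so that the quotient is well defined, and one must be careful that dividing by $\lambda_{\max}\lV x\rV + o(\lV x\rV)$ is legitimate — again fine for $x$ small since this is positive. I expect no real obstacle here; the proof is essentially a two-term Taylor expansion followed by Rayleigh-quotient bounds $\lambda_{\min}\lV x\rV^2\le\scal{Hx}{x}\le\lambda_{\max}\lV x\rV^2$ and $\lV Hx\rV\le\lambda_{\max}\lV x\rV$. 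The main thing to get right is simply the bookkeeping of which eigenvalue bounds which expression (smallest eigenvalue lower-bounds $f$, largest eigenvalue upper-bounds $\lV\nabla f\rV$), yielding the factor $\tfrac12$ from the quadratic term and the stated ratio.
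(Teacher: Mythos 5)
Your proof is correct and follows essentially the same route as the paper's: a second-order Taylor expansion of $f$ and a first-order expansion of $\nabla f$ at $0$, with the Rayleigh-quotient bounds $\scal{x}{\nabla^2 f(0)x}\ge\lambda_{\min}\lV x\rV^2$ and $\lV\nabla^2 f(0)x\rV\le\lambda_{\max}\lV x\rV$, followed by passing to the limit in the quotient. Your side remark correctly identifies that the $\limsup$ in \ref{A5} should be read at $x\to 0$ (consistent with the definition of $\gamma$ in \eqref{e24}), which is exactly how the paper uses it.
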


\begin{proof}
In view of \ref{A3}, \ref{A4} and the hypothesis on  $\nabla f^2(0)$, we have
\begin{equation}\label{e25}
f(x)=\frac{1}{2}\scal{x}{\nabla^2f(0)x}+o(\lV x\rV^2)\ge\frac{\lambda_{\min}}{2}\lV x\rV^2+o(\lV x\rV^2).
\end{equation}
Also, using the Taylor expansion of $\nabla f$ around $x=0$, 
$\nabla f(x)=\nabla^{2}f(0)x + o(\lV x\rV)$,
so that 
\begin{align}
\lV x\rV\,\lV\nabla f(x)\rV & =\lV x\rV\,\lV\nabla^2f(0)x\rV +o(\lV x\rV^2) \\
& \le
\lV x\rV^2\lV\nabla^2f(0)\rV +o(\lV x\rV^2)\\
& \le \lambda_{\max}\lV x\rV^2 +o(\lV x\rV^2).\label{e26}
\end{align}
By \eqref{e25}, \eqref{e26},
\begin{equation}\label{e27}
\frac{f(x)}{\lV x\rV\,\lV\nabla f(x)\rV}\ge\frac{\lambda_{\min}\lV x\rV^2+o(\lV x\rV^2)}{2\lambda_{\max}\lV x\rV^2+
o(\lV x\rV^2)}
\end{equation}
and the result follows by taking $\liminf$ in \eqref{e27},  since the right hand converges to $ \dfrac{\lambda_{\min} }{2\lambda_{\max}}>0$  
as $x \to 0$.
\qed  
\end{proof}
 
Note that nonsingularity of $\nabla^2f(0)$ holds when $f$ is of class ${\cal C}^2$ and strongly convex.

We consider next other instances for which assumption \ref{A5} holds. Now we deal with the case in which
$f(x)=\phi(\lV x\rV)$ with $\phi:\re\to\re\cup\{+\infty\}$, satisfying \labelcref{A2,A3,A4}. This case has a one 
dimensional flavor, and computations are easier. The first point to note is that 
\begin{equation}\label{ee27}
\liminf_{x\to 0}\frac{f(x)}{\lV x\rV\,\lV\nabla f(x)\rV}=\liminf_{t\to 0}\frac{\phi(t)}{t\phi'(t)},
\end{equation} 
so that assumption \ref{A5}
becomes:
\begin{enumerate}[label=A\arabic*$'$.,font=\bf,leftmargin=*,ref=A\arabic*$'$,start=4]
	\item \label{A6} $\phi$ satisfies $\displaystyle\liminf_{t\to 0}\dfrac{\phi(t)}{t\phi'(t)}>0$.
\end{enumerate}

More importantly, in this case $\nabla f(x)$ and $x$ are collinear, which allows for an improvement 
in the asymptotic constant: we will have $1-\gamma$ instead of $\sqrt{1-\gamma^2}$ in \eqref{e23}, as we show next.
We reformulate  \cref{p5,p6} for this case.

\begin{proposition}\label[proposition]{p8} 
Assume that $f(x)=\phi(\lV x\rV)$, with $\phi:\re\to\re\cup\{+\infty\}$ satisfying \labelcref{A2,A3,A4} and \labelcref{A6}. 
Take $(x,0)\in U$ with $x\ne 0$. Let $C(x,0)=(v,0)$. Then,
\begin{enumerate}[(i), format=\bf,leftmargin=*,align=right, widest=ii]
\item 
\begin{equation}\label{e28}
\frac{\lV C(x,0)\rV}{\lV T(x,0)\rV}=\frac{\lV v\rV}{\lV u\rV}= 
1-\frac{\phi(\lV u\rV)}{\phi'(\lV u\rV)\lV u\rV},
\end{equation}
\item 
\begin{equation}\label{e29}
\limsup_{x\to 0}\frac{\lV C(x,0)\rV}{\lV (x,0)\rV}=
1-\liminf_{x\to 0}\frac{f(x)}{\lV x\rV\,\lV\nabla f(x)\rV}=
1-\liminf_{t\to 0}\frac{\phi(t)}{t\phi'(t)}.
\end{equation}
\end{enumerate}
\end{proposition} 

\begin{proof}
In this case
\[
\nabla f(x)=\frac{\phi'(\lV x\rV)}{\lV x\rV}x
\]
so that \eqref{e12} becomes 
\[
x=\left(1+\frac{\phi(\lV u\rV )\phi'(u)}{\lV u\rV}\right)u,
\]
and \eqref{e14} can be rewritten as 
\[
v=\left(1-\frac{\phi(\lV u\rV)}{\phi'(\lV u\rV)\lV u\rV}\right)u.
\] 
Hence,
\[
\frac{\lV v\rV}{\lV u\rV}=1-\frac{\phi(\lV u\rV)}{\phi'(\lV u\rV)\lV u\rV},
\]
establishing \eqref{e28}. Then, \eqref{e29} follows from \eqref{e28} as in the proofs
of  \cref{p5,p6}, taking into account \eqref{ee27}. 

\qed \end{proof}

\begin{corollary}\label[corollary]{c4}
Let $(x^k,0)_{k\in\na}$ be the sequence generated by CRM with $(x^0,0)\in U$.
Assume that
$f(x)=\phi(\lV x\rV)$ with $\phi:\re\to\re\cup\{+\infty\}$ satisfying \labelcref{A2,A3,A4}. Then, 
\[\limsup_{k\to\infty}\frac{ \lV (x^{k+1},0)\rV}{\lV (x^k,0)\rV}=1-\hat\gamma,\]
with 
\begin{equation}\label{e30}
\hat\gamma\coloneqq \liminf_{t\to 0}\frac{\phi(t)}{t\phi'(t)}.
\end{equation}
If $\phi$ satisfies hypothesis \ref{A6}
then, the CRM sequence is Q-linearly convergent, with asymptotic constant equal to
$1-\hat\gamma$.
\end{corollary}

\begin{proof}
It is an immediate consequence of \cref{p8}(ii), in view of the definition of the circumcenter operator $C$, given in \eqref{e1}.

\qed \end{proof}

We verify next that assumption \ref{A6} is rather generic. It holds, \emph{e.g.}, if $\phi$ is analytic around $0$.

\begin{proposition}\label[proposition]{p9}
%Erro 1: "1/p" no lugar de "1-1/p"
If $\phi$ satisfies \labelcref{A2,A3,A4} and is analytic around $0$ then it satisfies \ref{A6}, and $\hat\gamma= 1/p $,
where $p \coloneqq \min\{j\mid \phi^{(j)}(0)\ne 0\}$.
\end{proposition}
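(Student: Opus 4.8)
The plan is to compute the limit $\hat\gamma = \lim_{t\to 0}\phi(t)/(t\phi'(t))$ directly from the power series of $\phi$ around $0$ and thereby verify both that the limit is positive (hence \ref{A6} holds) and that its value is $1-1/p$. Since $\phi$ is analytic around $0$, write $\phi(t)=\sum_{j\ge 0}a_j t^j$ with $a_j=\phi^{(j)}(0)/j!$. Hypotheses \labelcref{A3,A4} force $a_0=\phi(0)=0$ and $a_1=\phi'(0)=0$, so the series actually starts at the index $p=\min\{j\mid \phi^{(j)}(0)\ne 0\}$, which by \ref{A2} (and the fact that $0$ is the unique minimizer) satisfies $p\ge 2$ and $a_p\ne 0$. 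Thus $\phi(t)=a_p t^p + a_{p+1}t^{p+1}+\cdots = a_p t^p(1+O(t))$ and, differentiating term by term (legitimate for analytic functions), $\phi'(t)=p\,a_p t^{p-1}+ (p+1)a_{p+1}t^p+\cdots = p\,a_p t^{p-1}(1+O(t))$.

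The key step is then the elementary division: for small nonzero $t$,
\begin{equation}\label{e31}
\frac{\phi(t)}{t\phi'(t)}=\frac{a_p t^p(1+O(t))}{t\cdot p\,a_p t^{p-1}(1+O(t))}=\frac{1}{p}\cdot\frac{1+O(t)}{1+O(t)}.
\end{equation}
Letting $t\to 0$ gives $\hat\gamma=1/p$. Wait — this must be reconciled with the claimed value $1-1/p$; the point is that the relevant asymptotic constant in Corollary \ref{c4} is $1-\hat\gamma$ in the notation where $\hat\gamma$ denotes the limit in \eqref{e30}, so here one simply records $\lim_{t\to 0}\phi(t)/(t\phi'(t)) = 1/p$ and observes that, by the statement of Proposition \ref{p9}, $\hat\gamma$ is being reported as the \emph{complementary} quantity $1-1/p$ governing the convergence constant; I would align the notation with \eqref{e30} carefully so that the stated identity reads correctly. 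In either normalization, since $p\ge 2$ is finite, the limit is a strictly positive number in $(0,1]$, so hypothesis \ref{A6} is satisfied.

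Concretely, the steps in order: (1) invoke analyticity to expand $\phi(t)=\sum_{j\ge p}a_j t^j$ with $a_p\ne 0$, using \labelcref{A3,A4} to kill the $j=0,1$ terms and \ref{A2} to guarantee $p$ is well defined and $\ge 2$; (2) differentiate the series termwise to get $\phi'(t)=\sum_{j\ge p} j\,a_j t^{j-1}$; (3) factor $t^p$ out of $\phi(t)$ and $t^{p-1}$ out of $\phi'(t)$, so the quotient $\phi(t)/(t\phi'(t))$ becomes $1/p$ times a ratio of two power series each with constant term $1$; (4) take $t\to 0$ to obtain the limit $1/p$, deduce positivity and hence \ref{A6}, and convert to the asymptotic constant $\hat\gamma$ in the normalization of \eqref{e30}. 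The only mild subtlety — not really an obstacle — is making sure $p$ is finite and at least $2$: finiteness is immediate because $\phi\not\equiv 0$ (as $0$ is a strict minimizer by \ref{A2}), and $p\ge 2$ follows from \labelcref{A3,A4}; one should also note $a_p>0$ since $t^p$ must dominate near a minimizer, though only $a_p\ne0$ is needed for the ratio computation.
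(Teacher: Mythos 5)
Your proposal is correct and follows essentially the paper's own argument: expand $\phi$ in its Taylor series at $0$, use \labelcref{A3,A4} to kill the terms of order $<p$, and conclude $\lim_{t\to 0}\phi(t)/(t\phi'(t))=\bigl(\phi^{(p)}(0)/p!\bigr)/\bigl(\phi^{(p)}(0)/(p-1)!\bigr)=1/p>0$. The discrepancy you flagged is real and you resolved it correctly: with $\hat\gamma$ as defined in \eqref{e30} the limit is $1/p$ (this is exactly what the paper's proof computes), and the quantity $1-1/p$ appearing in the statement of Proposition \ref{p9} is the CRM asymptotic constant $1-\hat\gamma$ of Corollary \ref{c4}, so the statement's ``$\hat\gamma=1-1/p$'' is a notational slip rather than a flaw in your argument.
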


\begin{proof}
In this case $\phi(t)=(1/p !)\phi^{(p )}(0)t^p +o(t^{p +1})$ and $t\phi'(t)=(1/(p -1)!)\phi^{(p )}(0)t^p +
o(t^{p +1})$, and the result follows taking limits with $t\to 0$, taking into account \eqref{e30}.

\qed \end{proof}

Note that for an analytic $\phi$ the asymptotic constant is always of the form $1-1/p$ with $p\in\na$.
This is not the case in general. Take, \emph{e.g.}, $\phi(t)=\lv t\rv^{\alpha}$ with $\alpha\in\re$, $\alpha> 1$.
Then a simple computation shows that $\hat\gamma=1/\alpha$. Note that $\phi$ is of class ${\cal C}^p$,
where $p$ is the integer part of $\alpha$, but not of class ${\cal C}^{p+1}$, so that \cref{p9} does not apply.

Take now
\[ 
f(x)=\begin{cases} 
1-\sqrt{1-\lV x\rV^2},\quad\,\,\, {\rm if}\,\, \lV x\rV \le 1,\\
+\infty,\qquad\qquad\qquad\;\;\; {\rm otherwise},
\end{cases}
\]
\emph{i.e.}, $f(x)=\phi(\lV x\rV)$ with $\phi(t)=1-\sqrt{1-t^2}$, when $t\in [-1,1], \phi(t)=+\infty$ otherwise. 
Note that $f$ satisfies \labelcref{A2,A3,A4} and its effective domain is the 
unit ball in $\re^n$. Since $\phi$ is analytic around $0$ and $\phi''(0)\ne 0$, we get 
from \cref{p9} that $\hat\gamma=1/2$ and so the asymptotic constant of
the CRM sequence is also $1/2$. Note that the graph of $f$ is the lower hemisphere of the ball $B\subset\re^{n+1}$ centered
at $(0,1)$ with radius $1$. Observe also that the projection onto $B$ of a point of the form $(x,0)\in\re^{n+1}$
is of the form $(u,t)$ with $t<1$, so it belongs to $\epi(f)$. Hence, the sequences generated by CRM for the pair
$K,U$ with $K=\epi(f)$ and $K=B$ coincide. It follows easily that the sequence generated by CRM for a pair
$K,U$ where $K$ is any ball and $U$ is a hyperplane tangent to the ball, converges linearly, with asymptotic constant equal
to $1/2$.  
We remark that in all these cases the sequence generated by MAP converges sublinearly, by virtue of \cref{c3}.

We look now at a case where hypothesis \ref{A6} fails. Define
\[
f(x)=\begin{cases} 
e^{-\lV x\rV^{-2}},\quad\,\, {\rm if}\,\, \lV x\rV \le \frac{1}{\sqrt 3},\\
+\infty,\qquad\quad\,\,\, {\rm otherwise}.
\end{cases}
\]
so that $f(x)=\phi(\lV x\rV)$ with $\phi(t)=e^{-1/t^2}$, when $t\in (-3^{-1/2}, 3^{-1/2}), \phi(t)=+\infty$ otherwise.
Again $f$ satisfies \labelcref{A2,A3,A4}. It is easy to check that $\phi(t)/(t\phi'(t))=(1/2)t^2$, so that
$\lim_{t\to 0}\phi(t)/(t\phi'(t))=0$ and \ref{A6} fails. It is known that this $\phi$, which is of class ${\cal C}^\infty$
but not analytic, is extremely flat (in fact, $f^{(k)}(0)=0$ for all $k$), and not even CRM can overcome so much flatness;
in view of \cref{c4}, in this case it converges sublinearly, as MAP does. The examples above are also  presented as a study case in \cite{Bauschke:2016}, illustrating the slow convergence of the proximal point algorithm, Douglas-Rachford algorithm and alternating projections. 

Let us abandon 
such an appalling situation, and move over to other 
examples where CRM will be able to exhibit again its superiority; next, we deal with our second family of examples. 
In this case we keep the framework of the first family with just one change, namely in hypothesis \ref{A4} on $f$; now
we will request that $f(0)<0$. With this single trick  (and a couple of additional technical assumptions),
we will achieve linear convergence of the  MAP sequence and superlinear convergence of the CRM one.
We will assume also that the effective domain of $f$ is the whole space (differently from the previous section, we don't 
have now interesting examples with smaller effective domains; also, since now the limit of the sequences can be 
anywhere, a hypothesis on the effective domain becomes rather cumbersome). We'll also demand that $f$ be of class 
${\cal C}^2$. 

Finally, we will restrict ourselves to the case of $f(x)=\phi(\lV x\rV)$, with $\phi:\re\to\re\cup\{+\infty\}$. 
This assumption is not essential, but will considerably simplify our analysis. Thus, we rewrite the assumptions
for $\phi$, in this new context. We assume that function $\phi$  is proper, strictly convex and twice continuously differentiable, satisfying
\begin{enumerate}[label=A\arabic*$^\prime$.,font=\bf,leftmargin=*,ref=A\arabic*$^\prime$,start=2]
% \item \label{A2prime} $\phi$ has a unique minimizer.
\item \label{A3prime}$\phi'(0)=0$.
\item \label{A4prime}$\phi(0)<0$.
\end{enumerate}

In the remainder of the paper we will study the behavior of the MAP and CRM sequences 
for the pair $K,U\subset\re^{n+1}$, where
$K$ is the epigraph of $f(x)=\phi(\lV x\rV)$, with $\phi$ satisfying hypotheses \labelcref{A3prime,A4prime}  above,
and $U\coloneqq\{(x,0)\mid x\in\re^n\}\subset\re^{n+1}$. As in the previous case,  \cref{p2,p3} ensure that
both sequences converge to points in $K\cap U$. Since we are dealing with convergence rates, 
we will exclude the case 
in which the sequences of interest have finite convergence. We continue with an elementary property of
the limit of these sequences.

\begin{proposition}\label[proposition]{p10} Assume that $K,U$ are as above.  
Let $(x^*,0)$ be the limit of either the MAP or the CRM sequences and $t^*\coloneqq \lV x^*\rV$. Then, 
$\phi(t^*)=0$ and $\phi'(t^*)>0$.
\end{proposition}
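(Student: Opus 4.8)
The plan is to exploit the facts already established about the limit points: by Propositions \ref{p2} and \ref{p3}, the limit $(x^*,0)$ lies in $K\cap U$, and since $K=\epi(f)$ with $f(x)=\phi(\lV x\rV)$ and $U=\{(x,0)\}$, membership in $K\cap U$ forces $f(x^*)\le 0$, i.e. $\phi(t^*)\le 0$. First I would rule out $\phi(t^*)<0$. If this were the case, then $(x^*,0)$ would lie in the \emph{interior} of $K$ relative to the slice $U$ (because $\phi$ is continuous and $f(x^*)<0$ means all nearby points $(x,0)$ satisfy $f(x)<0$ too, hence belong to $K$); then for all large $k$ the iterates $(x^k,0)$ or $(z^k,0)$ would already be in $\inte(K)$, so $P_K$ acts as the identity on them, giving $T(x^k,0)=(x^k,0)$ and likewise $C(x^k,0)=(x^k,0)$ — the sequence would be stationary from some index on, contradicting our standing assumption that we have excluded finite convergence. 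This yields $\phi(t^*)=0$.

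Next I would establish $\phi'(t^*)>0$. Since $\phi$ is strictly convex, $\phi'$ is strictly increasing; combined with $\phi'(0)=0$ (hypothesis \ref{A3prime}) this gives $\phi'(t)>0$ for all $t>0$ and $\phi'(t)<0$ for all $t<0$. So it suffices to show $t^*>0$, equivalently $x^*\ne 0$. But $\phi(0)<0$ by \ref{A4prime}, while $\phi(t^*)=0$; since $\phi$ is a function, $t^*=0$ would force $\phi(0)=0$, a contradiction. Hence $t^*\ne 0$, so $t^*>0$ and $\phi'(t^*)>0$, as claimed.

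The main obstacle — really the only delicate point — is the first step: justifying that $\phi(t^*)<0$ forces the iterates into the interior of $K$ and hence produces finite (stationary) convergence. The subtlety is that $(x^k,0)$ converging to an interior point of the slice $K\cap U$ does put $(x^k,0)$ eventually inside $K$, but one must check that $P_K$ fixes such points and that therefore both the MAP operator $T=P_U\circ P_K$ and the CRM operator $C=\circum(\cdot,R_K(\cdot),R_U(R_K(\cdot)))$ fix them: for $T$ this is immediate since $P_U$ also fixes points of $U$; for $C$ one uses that $R_K$ is the identity on $\inte(K)$, so the three points $x,R_K(x),R_U(R_K(x))$ collapse to $x$ itself (here using also $R_U x=x$ for $x\in U$), and the circumcenter of a single point is that point. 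This contradicts the exclusion of finite convergence, closing the argument.
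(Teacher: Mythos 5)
Your proof is correct and follows essentially the same route as the paper's: both arguments hinge on the standing exclusion of finite convergence (an iterate landing in $K\cap U$ is fixed by $T$ and by $C$, so the iteration halts), which the paper uses directly to place the limit in $\bound(K)\cap U$ and you use contrapositively to rule out $\phi(t^*)<0$. The second half — strict convexity, $\phi'(0)=0$ and $\phi(0)<0$ forcing $t^*>0$ and $\phi'(t^*)>0$ — is identical to the paper's argument.
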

\begin{proof}
Since these sequences stay in $U$,
remain outside $K$ (otherwise convergence would be finite), and converge to points in
$K\cap U$, it follows that their limits must belong to $\bound(K)\cap U$, where $\bound(K)\coloneqq \{(x,f(x))\mid x\in\re^n\}$ 
denotes the boundary of $K$. So, we conclude that
$0=f(x^*)=\phi(t^*)$. Now, since $\phi'(0)=0$,  in view of \labelcref{A3prime},  and $\phi'$ is strictly increasing, we
conclude that $\phi'(t)>0$ for all $t>0$. Note that $x^*\ne 0$, because $f(x^*)=0$ and $f(0)<0$ by \ref{A4prime}.
Hence $t^*=\lV x^*\rV>0$, so that $\phi'(t^*)>0$.

\qed \end{proof}   
 
Now we analyze the behavior of the operators $C$ and $T$, in this case.

\begin{proposition}\label[proposition]{p11} 
Assume that  $K,U\subset\re^{n+1}$ are defined as $U\coloneqq\{(x,0)\mid x\in\re^n\}\subset\re^{n+1}$ and 
$K= \epi(f)$ where $f(x)=\phi(\lV x\rV)$ and $\phi$ satisfies \labelcref{A3prime,A4prime}. Let $T$ and $C$ be the operators
associated to MAP and CRM respectively, and $(z^*,0)$ and $(x^*,0)$ the limits of the sequences $(z^k)_{k\in\na}$ and $(x^k)_{k\in\na}$ 
generated
by these methods, starting from some $(z^0,0)\in\re^{n+1}$, and some $(x^0,0)\in U$, respectively. Then,
\begin{equation}\label{e37}  
\limsup_{x\to z^*}\frac{\lV T(x,0)-(z^*,0)\rV}{\lV (x,0)-(z^*,0)\rV}=
%Erro 2: suprimir \lV z^*\rV
\frac{1}{1+\phi'(\lV z^*\rV)^2}
\end{equation}
and
\begin{equation}\label{e38}
\limsup_{x\to z^*}\frac{\lV C(x,0)-(x^*,0)\rV}{\lV (x,0)-(x^*,0)\rV}=0.
\end{equation}
\end{proposition}

\begin{proof} 
Since, in this case, $\nabla f(x)=\dfrac{\phi'(\lV x\rV)}{\lV x\rV}x$ for all $x\ne 0$, we rewrite \eqref{e12} and 
\eqref{e14} as
\begin{equation}\label{e31}
%Erro 3: Adicionar \lV u\rV no denominardor, para o qual é preciso un \frac e trocar ( e ) por \left( e\right) 
x=\left(1+\frac{\phi(\lV u\rV)\phi'(\lV u\rV)}{\lV u\rV}\right)u
\end{equation}
and
\begin{equation}\label{e32} 
v=\left(1-\frac{\phi(\lV u\rV)}{\phi'(\lV u\rV)\lV u\rV}\right)u.
\end{equation}
In view of \eqref{e31} and \eqref{e32}, $u,v$ and $x$ are collinear. In terms of the operators
$C$ and $T$, we have that $x,C(x)$ and $T(x)$ are collinear, so the same holds for the whole sequences
generated by MAP, CRM  and hence also for their limits $(z^*,0), (x^*,0)$. This is a consequence of 
the one-dimensional flavor of this family of examples. So, we define $s\coloneqq\lV z^*\rV$, $t\coloneqq\lV x^*\rV$,
$r\coloneqq\lV u\rV$, and therefore we get $u=(r/s)z^*=(r/t)x^*$. We compute next the   
quotients 
\[\frac{\lV (T(x),0)-(z^*,0)\rV}{\lV (x,0)-(z^*,0)\rV} = \frac{\lV u-z^*\rV}{\lV x-z^*\rV}
\]
and 
\[\frac{\lV (C(x),0)-(x^*,0)\rV}{\lV (x,0)-(x^*,0)\rV} =
\frac{\lV v-x^*\rV}{\lV x-x^*\rV},
\] needed for determining the convergence rate of the MAP and CRM sequences.
We start with the MAP case.
\begin{align}
\frac{\lV T(x,0)-(z^*,0)\rV}{\lV (x,0)-(z^*,0)\rV} &=  \frac{\lV u-z^*\rV}{\lV x-z^*\rV}=
%Erro 4: suprimir \frac{r}{s} no fim 
\frac{s\lv\frac{r}{s}-1\rv}{s\lv\frac{r}{s}-1+\phi(r)\phi'(r)\rv}
\\
%Erro 5: sumprimir r logo depois de +
&=\frac{\lv r-s\rv}{\lv r-s+s\phi'(r)\phi(r)\rv}\\
%Erro 6: suprimir r logo depois de +
&=\frac{1}{\lv 1+s\phi'(r)\left(\frac{\phi(r)-\phi(s)}{r-s}\right)\rv},\label{e33}
\end{align}
using \eqref{e31} in the second equality and the fact that $s=\phi(\lV z^*\rV)=f(z^*)=0$, established in \cref{p10}, in the fourth one.

Now, we perform a similar computation for the operator $C$, needed for the CRM sequence.

\begin{align}
\frac{\lV C(x,0)-(x^*,0)\rV}{\lV (x,0)-(x^*,0)\rV} &= \frac{\lV v-x^*\rV}{\lV x-x^*\rV}=
%Erro 7: muda o denominador
\frac{t\lv \left(1-\frac{\phi(r)}{\phi'(r)r}\right)\frac{r}{t}-1\rv}{t\lv\frac{r}{t}-1+\phi(r)\phi'(r)\rv}
\\
%Erro 8:muda o denominador
&=\frac{\lv\left(1-\frac{\phi(r)}{r\phi'(r)}\right)r-t\rv}{\lv r-t+t\phi(r)\phi'(r))\rv}=
%Erro 9: muda o denominador
\frac{\lv r-t-\frac{\phi(r)}{\phi'(r)}\rv}{\lv r-t+t\phi(r)\phi'(r)\rv}
\\
\label{e34}
&=\frac{\lv 1-\frac{1}{\phi'(r)}\left(\frac{\phi(r)-\phi(t)}{r-t}\right)\rv}{\lv 1+
%Erro 10: suprimir r logo antes de \phi'(r)
t\phi'(r)\left(\frac{\phi(r)-\phi(t)}{r-t}\right)\rv},
\end{align}
using \eqref{e32} in the second equality, and \cref{p10}, which implies $\phi(t)=0$, in the fifth one.

Finally, we take limits in \eqref{e33} with $x\to z^*$ and in \eqref{e34} with $x\to x^*$. Note that, since $u=P_K(x)$,
$\lim _{x\to z^*}u=P_K(z^*)=z^*$, because $z^*\in K$. Hence we take limit with $r\to s$ in the right hand side of
\eqref{e33}. We also take limits with $x\to x^*$ in \eqref{e34}. By the same token, taking limit with $r\to t$ in the right hand side,
we get
\begin{align}
\limsup_{x\to z^*}\frac{\lV T(x,0)-(z^*,0)\rV}{\lV (x,0)-(z^*,0)\rV} &=
%Erro 11: suprimir r antes de \phi'(r)
 \limsup_{r\to s}\frac{1}{\lv 1+s\phi'(r)\left(\frac{\phi(r)-\phi(s)}{r-s}\right)\rv}\\ &= \frac{1}{\lv 1+s\displaystyle\lim_{r\to s}\phi'(r)\left(\frac{\phi(r)-\phi(s)}{r-s}\right)\rv} \\ &=
\frac{1}{1+s\phi'(s)^2} \label{e35}
\end{align}
and 
\begin{align}\label{e36}
\limsup_{x\to x^*}\frac{\lV C(x,0)-(x^*,0)\rV}{\lV (x,0)-(x^*,0)\rV}  & = 
\limsup_{r\to t}\frac{\lv 1-\frac{1}{\phi'(r)}\left(\frac{\phi(r)-
%Erro 12: suprimir r antes de \phi'(r)
\phi(t)}{r-t}\right)\rv}{\lv 1+t\phi'(r)\left(\frac{\phi(r)-\phi(t)}{r-t}\right)\rv}\\
& = 
\frac{\lv 1-\displaystyle\lim_{r\to t}\frac{1}{\phi'(r)}\left(\frac{\phi(r)-\phi(t)}{r-t}\right)\rv}{\lv 1+t\displaystyle\lim_{r\to t}\phi'(r)\left(\frac{\phi(r)-\phi(t)}{r-t}\right)\rv}\\
%Erro 13: suprimir t antes de \phi'(t)
&=\frac{\lv 1-\frac{\phi'(t)}{\phi'(t)}\rv}{\lv 1+t\phi'(t)^2\rv}=0. 
\end{align}
The results follow, in view of the definitions of $s$ and $t$, from \eqref{e35} and \eqref{e36}, respectively.

\qed \end{proof}

Note that the denominators in the expressions of \eqref{e35} and \eqref{e36} are the same; the difference lies
in the numerators: in the MAP case it is $1$; in the CRM one, the presence of the factor $(\phi(r)-\phi(t))/(r-t)$
makes the numerator go to $0$ when $r$ tends to $t$.
 
\begin{corollary}\label[corollary]{c5}
Under the assumptions of \cref{p11} the sequence generated by MAP converges 
Q-linearly to a point $(z^*,0)\in K\cap U$, with
%Erro 14: suprimir \lV z^*\rV
asymptotic constant equal to  $1/(1+\phi'(\lV z^*\rV)^2)$, and the sequence
generated by CRM converges superlinearly.
\end{corollary}

\begin{proof}
The result for the MAP sequence follows from \eqref{e37} in \cref{p11}, observing
that for $x=z^k$, we have $T(x,0)=(z^{k+1}, 0)$. Note that the asymptotic constant is indeed
smaller than $1$, because $z^*\ne 0$, and $\phi'(\lV z^*\rV)\ne 0$ by \cref{p10}.
The result for the CRM sequence follows from \eqref{e38} in \cref{p11}, observing that for $x=x^k$,
we have $C(x,0)=(x^{k+1},0)$.

\qed \end{proof}

We now present an example that, although very simple, enables one to visualize how fast CRM is in comparison to MAP. 

\begin{example}
Let $\phi:\re\to \re$ given by $\phi(t) = \lv t\rv^\alpha - \beta$, where $\alpha>1$ and $\beta \geq 0$.  Consider $K, U\subset \re^2$ such that  $K \coloneqq \epi(\phi)$ and $U$ is the abscissa axis. Note that, if $\beta = 0$, the error bound condition \ref{EB}  between $K$ and $U$ does not hold. For any $\beta >0$, though, it is easily verifiable that  \ref{EB} is valid. \Cref{fig:1} shows CRM and MAP tracking a point in $K\cap U$ up to a precision $\varepsilon > 0$, with the same starting point $(1.1, 0)\in\re^2$.  We fix $\alpha=2$ and take $\beta = 0$ in \Cref{fig:1a} and $\beta = 0.06$ in \Cref{fig:1b}.  We count and display the iterations of the MAP sequence  $(z^k)_{k\in\na}$ and the CRM sequence $(x^k)_{k\in\na}$ until $\dist(z^k,K\cap U) \leq \varepsilon$ and $\dist(x^k,K\cap U) \leq \varepsilon$, with $\varepsilon = 10^{-3}$. The figures below depict the results on MAP and CRM derived in  \cref{c3,c5}.

\begin{figure}[!htb]
\centering
  \begin{subfigure}[b]{\linewidth}
    \centering
  \includegraphics[width=.9\textwidth]{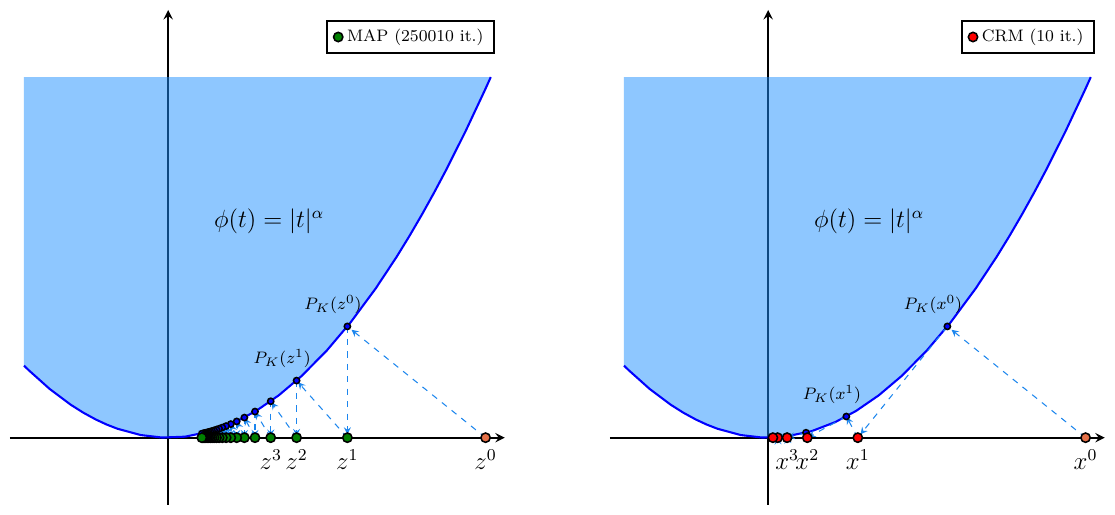}    \caption{\label{fig:1a} Lack of error bound: MAP converges sublinearly and CRM linearly.}
  \end{subfigure}\\
  \begin{subfigure}[b]{\linewidth}
    \centering
    \includegraphics[width=.9\textwidth]{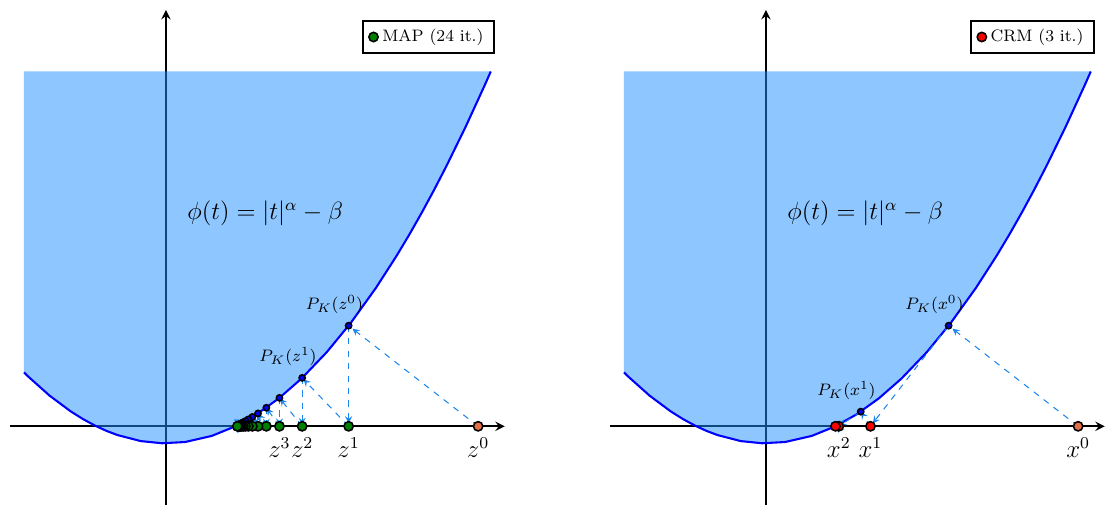}
    \caption{\label{fig:1b} Presence of error bound: MAP converges linearly and CRM superlinearly.}
  \end{subfigure} 
% figure caption is below the figure
\caption{Illustrative comparison between MAP and CRM.}
\label{fig:1}       % Give a unique label
\end{figure}

\end{example}

We emphasize that in the cases above MAP exhibits its usual behavior, \emph{i.e.}, linear convergence.
The examples of the first family were somewhat special because, roughly speaking, the angle between 
$K$ and $U$ goes to $0$ near the intersection. On the other hand, the superlinear convergence of CRM is quite remarkable. 
The additional computations of CRM over MAP reduce to the trivial determination of the reflections and the solution of a system of two linear equations in two  variables, for finding the circumcenter~\cite{Behling:2018a,Bauschke:2018a}. Now MAP is a typical first-order 
method (projections disregard the curvature of the sets), and thus its convergence is generically no better than linear.
We have shown that the CRM acceleration improves this linear convergence to superlinear in a rather large class of instances. Long live CRM!

 We conjecture that CRM enjoys superlinear convergence whenever $U$ intersects the interior of $K$.
The results in this section firmly support this conjecture.

  \begin{acknowledgements}
We thank the anonymous referees for their valuable suggestions which significantly improved  this manuscript. 
\end{acknowledgements}

% \bibliographystyle{spmpsci}

% \bibliography{refs}

\end{document}